\newcommand{\bw}{\bigwedge}
\newcommand{\bv}{\bigvee}
\newcommand{\pd}{p^\downarrow}
\newcommand{\Cp}{\Phi[\wp(P)]}
\newcommand{\LMD}{\textbf{MD}}
\newcommand{\cU}{\mathcal U}
\newcommand{\cV}{\mathcal V}
\theoremstyle{plain}
\newtheorem{thm}{Theorem}[section]
\newtheorem{prop}[thm]{Proposition}
\newtheorem{cor}[thm]{Corollary}
\newtheorem{lemma}[thm]{Lemma}
\theoremstyle{definition}
\newtheorem{defn}[thm]{Definition}
\newtheorem{example}[thm]{Example}
\numberwithin{equation}{section}
\begin{document}
\title{Closure operators, frames, and neatest representations}
\author{Rob Egrot}
\date{}

\maketitle

\begin{abstract}
Given a poset $P$ and a standard closure operator $\Gamma:\wp(P)\to\wp(P)$ we give a necessary and sufficient condition for the lattice of $\Gamma$-closed sets of $\wp(P)$ to be a frame in terms of the recursive construction of the $\Gamma$-closure of sets. We use this condition to show that given a set $\cU$ of distinguished joins from $P$, the lattice of $\cU$-ideals of $P$ fails to be a frame if and only if it fails to be $\sigma$-distributive, with $\sigma$ depending on the cardinalities of sets in $\cU$. From this we deduce that if a poset has the property that whenever $a\wedge(b\vee c)$ is defined for $a,b,c\in P$ it is necessarily equal to $(a\wedge b)\vee (a\wedge c)$, then it has an $(\omega,3)$-representation. This answers a question from the literature.     
\end{abstract}  

\section{Introduction}
Schein \cite{Sch72} defines a meet-semilattice $S$ to be distributive if it satisfies the first order definable condition that whenever $a\wedge(b\vee c)$ is defined, $(a\wedge b)\vee(a\wedge c)$ is also defined and the two are equal. A dual definition can, of course, be made for join-semilattices. Here for simplicity all semilattices are considered to be meet-semilattices, and the order duals to our results are left unstated.  Note that Schein's version of distributivity is strictly weaker than the notion of distributivity introduced by Gr\"atzer and Schmidt \cite{GraSchm62} (see for example \cite{Var75}), though they coincide for lattices. Schein's distributivity (hereafter referred to as $3$-distributivity, for reasons that will become clear later) is equivalent to the property of being embeddable into a powerset algebra via a semilattice homomorphism preserving all existing binary joins \cite{Bal69}. 

Schein's $3$-distributivity can be generalized to the concept of $\alpha$-distributivity for cardinals $\alpha$ (see definition \ref{D:dist}). This notion has been studied when $\alpha = n <\omega$ \cite{Hic78}, when $\alpha =\omega$ \cite{Var75,CorHic78}, and when $\alpha$ is any regular cardinal \cite{HicMon84}. Note that if $m,n\leq\omega$ with $m<n$ then $n$-distributivity trivially implies $m$-distributivity, but the converse is not true \cite{Kea97}. A recurring theme in these investigations is that a semilattice $S$ is $\alpha$-distributive if and only if the complete lattice of downsets of $S$ that are closed under existing $\alpha$-small joins is a frame (i.e. it satisfies the complete distributivity condition that $x\wedge \bv Y = \bv_Y(x\wedge y)$ for all elements $x$ and subsets $Y$). 

Notions relating to distributivity have also been studied in the more general setting of partially ordered sets (posets) \cite{Schm72a,HicMon84}. Note that the concepts in \cite{Schm72a} and \cite{HicMon84} are not equivalent, even for semilattices (that of \cite{Schm72a} is a straightforward generalization of the distributivity of Gr\"atzer and Schmidt \cite{GraSchm62}, while that of \cite{HicMon84} is more closely related to that of Schein \cite{Sch72}). 

As the transition from lattices to semilattices causes previously equivalent formulations of distributivity to diverge, so too does the transition from semilattices to posets. The following example is of particular interest to us. For a semilattice, being an $\omega$-distributive poset as defined in \cite{HicMon84} is equivalent to being an $\omega$-distributive semilattice in the sense used here \cite[proposition 2.3]{HicMon84}, which is in turn equivalent to being embeddable into a powerset algebra via a map preserving all existing finite meets and joins \cite{Bal69}. However, this is not true for arbitrary posets, as the $\omega$-distributivity of \cite{HicMon84} is strictly stronger than the powerset algebra embedding property \cite[example 4.2 and theorem 4.8]{HicMon84}.

The property of being embeddable into a powerset algebra via an embedding preserving meets and joins of certain cardinalities has been studied using the terminology \emph{representable} \cite{CheKem92,Kem93, Egr16} (see definition \ref{D:rep}). A notable departure from the semilattice case is that the first order theory of representable posets is considerably more complex. For example, while the class of $(m,n)$-representable posets is elementary for all $m,n$ with $2<m,n\leq\omega$ \cite{Egr16}, explicit first order axioms are not known, and the class cannot be finitely axiomatized \cite{Egrsub2}. This contrasts with the semilattice case where intuitive first order axioms are known, and only a finite number are required to ensure $(\omega,n)$-representability for finite $n$ \cite{Bal69}.   

In \cite{CheKem92} a condition for posets, which we will refer to as $\LMD$, generalizing the distributive property for semilattices is defined (see definition \ref{D:LMD}). The authors conjecture (\cite[section 3]{CheKem92}) that this condition is sufficient to ensure a poset has a representation preserving existing binary joins and all existing finite meets (they call this a \emph{neatest representation}). It is easily seen that this is not a \emph{necessary} condition for such a representation, e.g. \cite[example 1.5]{Egr16}). 

In the original phrasing of \cite[definition 1]{CheKem92} it is unclear whether a neatest representation  must preserve \emph{arbitrary} existing meets or only finite ones. From context we assume the latter, as the with the former definition the conjecture is false. To see this note that every Boolean algebra satisfies $\LMD$, and indeed the stronger condition that if $x\wedge \bv Y$ is defined then $\bv_Y(x\wedge y)$ is also defined and the two are equal (a Boolean algebra also satisfies the dual to this condition, so a complete Boolean algebra is structurally both a frame and a co-frame). However, it is well known that a Boolean algebra has a representation preserving arbitrary meets and/or joins if and only if it atomic \cite[corollary 1]{Abi71}. 

However, if we only demand that \emph{finite} meets are preserved then the conjecture is indeed correct, which we prove in this note. The main step in our proof is a result classifying the standard closure operators on $\wp(P)$ whose lattices of closed sets are frames as being precisely those that can be constructed using a certain recursive procedure (theorem \ref{T:main}). This can be viewed as a generalization of \cite[theorem 2.7]{HicMon84}. The proof of the conjecture about neatest representations is then an easy corollary.    

We note that the argument in the solution of this conjecture boils down to a straightforward extension of the forward direction of the argument for semilattices in \cite[theorem 2]{Hic78} to the poset setting. The result however is nevertheless perhaps surprising given that the class of posets with this kind of representation cannot be finitely axiomatized (see section \ref{S:obs} for a discussion of this).  

We proceed as follows. In section \ref{S:rad} we define introduce join-specifications and the concept of the radius of a standard closure operator, and assorted supporting terminology. In section \ref{S:lattices} we use this to prove the central theorem (theorem \ref{T:main}), and in the fourth section we prove a general version of the conjecture on neatest representations (corollary \ref{C:final}). The final section is devoted to a short discussion of the implications of this for the theory of poset representations, and some indirectly related questions in complexity theory.

\section{Join-specifications and standard closure operators}\label{S:rad}
We begin with a brief introduction to closure operators and their relationship with poset completions. This topic has been studied in detail, and a recent survey can be found in \cite{Ern09}. Nevertheless it will be useful to present some well known results with a consistent terminology, and to make explicit some facts that are only implicit elsewhere. We will give specific references when appropriate.

\begin{defn}[closure operator]
Given a set $X$ a closure operator on $X$ is a map from $\wp(X)$ to itself that is extensive, monotone and idempotent. I.e. such that for all $S,T\subseteq X$ we have:
\begin{enumerate}
\item $S\subseteq\Gamma(S)$,
\item $S\subseteq T\implies \Gamma(S)\subseteq \Gamma(T)$, and
\item $\Gamma(\Gamma(S))=\Gamma(S)$.
\end{enumerate} 
\end{defn} 

Given a set $S\subseteq P$ we denote $\{p\in P: p\leq s$ for some $s\in S\}$ by $S^\downarrow$, and $\pd$ is used a shorthand for $\{p\}^\downarrow$. Given a poset $P$ we are interested in closure operators on $P$. In particular we are interested in \emph{standard} closure operators, that is, closure operators such that $\Gamma(\{p\})=\pd$ for all $p\in P$. 

\begin{defn}[join-completion]
Given a poset $P$ a join-completion of $P$ is a complete lattice $L$ and an order embedding $e:P\to L$ such that $e[P]$ (the image of $P$ under $e$) is join-dense in $L$. 
\end{defn}

Given a standard closure operator $\Gamma:\wp(P)\to\wp(P)$, the $\Gamma$-closed sets form a complete lattice when ordered by inclusion (which we denote with $\Gamma[\wp(P)]$), and the map $\phi_\Gamma:P\to\Gamma[\wp(P)]$ defined by $\phi_\Gamma(p)=\pd$ is a join-completion of $P$ (we usually omit the subscript and just write $\phi$). If $I$ is an indexing set and $C_i$ is a $\Gamma$-closed set for all $i\in I$ then $\bw_I C_i= \bigcap_I C_i$ and $\bv_I C_i= \Gamma (\bigcup_I C_i)$ (see e.g. \cite[section 2.1]{Ern09}). Conversely, given a join-completion $e:P\to L$ the sets $e^{-1}[x^\downarrow]$ for $x\in L$ define the closed sets of a standard closure operator $\Gamma_e:\wp(P)\to\wp(P)$. This well known connection can be expressed as the following proposition. 

\begin{prop}
There is a dual isomorphism between the complete lattice of standard closure operators on $P$ (ordered by pointwise inclusion) and the complete lattice of ($e[P]$ preserving isomorphism classes of) join-completions of $P$ (ordered by defining $(e_1:P\to L_1)\leq (e_2:P\to L_2)$ if and only if there is an order embedding $\psi:L_1\to L_2$ such that $\psi\circ e_1 = e_2$. If $\psi$ exists it will necessarily preserve all existing meets).
\end{prop}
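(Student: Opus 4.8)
The plan is to promote the two constructions already described in the excerpt to a pair of mutually inverse, order-reversing bijections. Write $F(\Gamma)$ for the join-completion $\phi_\Gamma:P\to\C$ attached to a standard closure operator $\Gamma$, and $G(e)$ for the standard closure operator $\Gamma_e$ attached to a join-completion $e:P\to L$, whose closed sets are the sets $e^{-1}[x^\downarrow]$, $x\in L$. The text already records that $F(\Gamma)$ really is a join-completion and that $G(e)$ really is a standard closure operator, so the remaining content is: (i) $F$ and $G$ are mutually inverse up to $e[P]$-preserving isomorphism; (ii) they reverse the two orders; (iii) the parenthetical claim about $\psi$; and (iv) both posets are complete lattices. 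I would then read off the statement by transporting the (easy) lattice structure on the closure-operator side across the dual isomorphism.

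For (i): since $\Gamma$ is standard, every $\Gamma$-closed set $C$ is a downset (if $q\leq p\in C$ then $q^\downarrow\subseteq\pd=\Gamma(\{p\})\subseteq\Gamma(C)=C$), so $\phi_\Gamma^{-1}[C^\downarrow]=\{p:\pd\subseteq C\}=\{p:p\in C\}=C$; hence $\Gamma$ and $\Gamma_{\phi_\Gamma}$ have the same closed sets and so coincide. In the other direction the map $x\mapsto e^{-1}[x^\downarrow]$ from $L$ to $\Gamma_e[\wp(P)]$ is onto by the very definition of $\Gamma_e$, and it is an order isomorphism because join-density of $e[P]$ lets one recover $x$ from $e^{-1}[x^\downarrow]$ as $\bv\{e(p):e(p)\leq x\}$ (so $e^{-1}[x^\downarrow]\subseteq e^{-1}[y^\downarrow]$ forces $x\leq y$); it sends $e(p)$ to $\pd=\phi_{\Gamma_e}(p)$, so it is an equivalence of join-completions, giving $e\cong F(G(e))$.

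For (ii), the crucial observation is that $\Gamma_1\leq\Gamma_2$ pointwise if and only if every $\Gamma_2$-closed set is $\Gamma_1$-closed: if $\Gamma_2(S)=S$ then $S\subseteq\Gamma_1(S)\subseteq\Gamma_2(S)=S$, and conversely $\Gamma_2(S)$ is then $\Gamma_1$-closed and contains $S$, so $\Gamma_1(S)\subseteq\Gamma_1(\Gamma_2(S))=\Gamma_2(S)$. Granting this, $\Gamma_1\leq\Gamma_2$ makes the inclusion $\Gamma_2[\wp(P)]\hookrightarrow\Gamma_1[\wp(P)]$ an order embedding commuting with the two copies of $p\mapsto\pd$, so $F(\Gamma_2)\leq F(\Gamma_1)$; and conversely any order embedding $\psi:\Gamma_2[\wp(P)]\to\Gamma_1[\wp(P)]$ fixing every $\pd$ satisfies $p\in C\iff\pd\subseteq C\iff\pd\subseteq\psi(C)\iff p\in\psi(C)$ for $\Gamma_2$-closed $C$, so $\psi(C)=C$, whence every $\Gamma_2$-closed set is $\Gamma_1$-closed and $\Gamma_1\leq\Gamma_2$. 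The same join-density trick shows the completion order is genuinely a partial order on equivalence classes: a self-embedding $\theta$ of a join-completion fixing the image of $P$ must be the identity, since $\theta(x)\geq\bv\{e(p):e(p)\leq x\}=x$ while every $e(q)\leq\theta(x)$ also satisfies $e(q)\leq x$, forcing $\theta(x)\leq x$. Together, $F$ is a dual isomorphism of posets.

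Finally, the parenthetical is one more application of join-density: if $\psi:L_1\to L_2$ is an order embedding with $\psi\circ e_1=e_2$ and $\bw_I x_i=x$ in $L_1$, then $\psi(x)$ is a lower bound of $\{\psi(x_i)\}$, and any lower bound $z$ dominates only elements $e_2(p)$ with $e_1(p)\leq x_i$ for all $i$, hence with $e_1(p)\leq x$, hence with $e_2(p)\leq\psi(x)$, so $z\leq\psi(x)$. For completeness, identify each standard closure operator with its family of closed sets; these are exactly the collections of downsets of $P$ that contain $\pd$ for each $p$ and are closed under arbitrary intersections. Such collections are themselves closed under arbitrary intersections and contain a largest member (the family of all downsets, i.e.\ the operator $S\mapsto S^\downarrow$), so they form a complete lattice, and hence so does its order-dual, the poset of standard closure operators; transporting along $F$ exhibits the join-completions of $P$ as a complete lattice as well. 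I do not anticipate a real obstacle here; the only point that needs care is keeping the two orders aligned (more closed sets $\leftrightarrow$ smaller closure operator $\leftrightarrow$ larger join-completion), which the equivalence ``$\Gamma_1\leq\Gamma_2$ iff every $\Gamma_2$-closed set is $\Gamma_1$-closed'' pins down once and for all.
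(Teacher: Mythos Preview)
Your proposal is correct. The paper does not actually give a proof: it simply says ``A direct argument is straightforward'' and refers the reader to \cite[propositions 2.1 and 2.19]{Ern09} and the introduction to \cite{Schm72a}. What you have written is precisely such a direct argument, carried out in full; in particular your use of join-density to recover $x$ from $e^{-1}[x^\downarrow]$, your characterization $\Gamma_1\leq\Gamma_2\iff\Gamma_2[\wp(P)]\subseteq\Gamma_1[\wp(P)]$, and your verification of the parenthetical meet-preservation claim are all sound and constitute the expected content of the omitted proof.
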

\begin{proof}
A direct argument is straightforward. See for example \cite[propositions 2.1 and 2.19]{Ern09}, or the introduction to \cite{Schm72a}, for equivalent results.
\end{proof} 

We are interested in join-completions $e:P\to L$ where the embedding $e$ preserves certain existing joins from $P$. Often this is done by making some uniform selection, such the joins of all sets smaller than some fixed cardinal (when they exist, see e.g. \cite{Schm72b}). We intend to be more general, and to that end we make the following definition. 

\begin{defn}[join-specification]\label{D:jspec} 
Let $P$ be a poset. Let $\cU$ be a subset of $\wp(P)$. Then $\cU$ is a join-specification (of $P$) if it satisfies the following conditions:
\begin{enumerate}
\item $\bv S$ exists in $P$ for all $S\in \cU$, 
\item $\{p\}\in \cU$ for all $p\in P$, and
\item $\emptyset\notin \cU$.
\end{enumerate}
\end{defn}

Definition \ref{D:jspec} is similar to that of a \emph{subset selection} (see e.g. \cite[section 2]{Ern09}). The difference is that we demand that the selection contains the singletons and that every selected set has a defined join. This serves to tidy up some of the later definitions. 

\begin{defn}[radius of $\cU$]\label{D:rad} 
Given a join-specification $\cU$ we define the radius of $\cU$ to be the smallest cardinal $\sigma$ such that $\sigma> |S|$ for all $S\in\cU$. 
\end{defn}

\begin{defn}[$\Gamma_\cU$]\label{D:G}
Given a join-specification $\cU$ with radius $\sigma$ and $S\subseteq P$ we define the following subsets of $P$ using transfinite recursion.
\begin{itemize}
\item $\Gamma_0(S) = S^\downarrow$.
\item If $\alpha+1$ is a successor ordinal then $\Gamma_{\alpha+1}(S)=\{\bv T : T\in\cU$ and $T\subseteq \Gamma_\alpha(S)\}^\downarrow$.
\item If $\lambda$ is a limit ordinal $\Gamma_\lambda(S)=\bigcup_{\beta<\lambda}\Gamma_\beta(S)$.
\end{itemize}
We define $\Gamma_\cU:\wp(P)\to\wp(P)$ by $\Gamma_\cU(S)=\Gamma_{\chi}(S)$ for all $S\in\wp(P)$, where $\chi$ is the smallest regular cardinal with $\sigma\leq\chi$.
\end{defn}

\begin{defn}[$\cU$-ideal] Given a join-specification $\cU$ we define a $\cU$-ideal of $P$ to be a down-set closed under joins from $\cU$. We define the empty set to be a $\cU$-ideal. 
\end{defn}
The following is a generalization of \cite[proposition 1.2]{HicMon84}.

\begin{prop}\label{P:trans}
If $\cU$ is a join-specification then $\Gamma_\cU$ is the standard closure operator taking $S\subseteq P$ to the smallest $\cU$-ideal containing $S$. 
\end{prop}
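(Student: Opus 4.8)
The plan is to establish three things: (i) each $\Gamma_\alpha(S)$ is a down-set and the chain $\Gamma_0(S)\subseteq\Gamma_1(S)\subseteq\cdots$ is increasing; (ii) the recursion stabilizes at stage $\chi$, so that $\Gamma_\cU(S)=\Gamma_\chi(S)$ is in fact a $\cU$-ideal containing $S$; and (iii) $\Gamma_\cU(S)$ is the \emph{smallest} such, from which it follows easily that $\Gamma_\cU$ is a standard closure operator.

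For (i), I would argue by transfinite induction on $\alpha$. Each $\Gamma_\alpha(S)$ is manifestly a down-set: $\Gamma_0(S)=S^\downarrow$ is, the successor stage is defined as a $(\cdot)^\downarrow$, and a union of down-sets is a down-set. Monotonicity of the chain also goes by induction: $\Gamma_0(S)\subseteq\Gamma_1(S)$ because every $p\in S^\downarrow$ satisfies $p=\bv\{p\}$ with $\{p\}\in\cU$ (using condition 2 of Definition \ref{D:jspec}) and $\{p\}\subseteq\Gamma_0(S)$; the successor step uses the induction hypothesis together with monotonicity of $T\mapsto\{\bv T:\ldots\}^\downarrow$ in the ambient set; the limit step is immediate.

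For (ii), I would first note that since $\chi$ is regular and $\chi\geq\sigma>|T|$ for every $T\in\cU$, any $T\in\cU$ with $T\subseteq\Gamma_\chi(S)=\bigcup_{\beta<\chi}\Gamma_\beta(S)$ must already be contained in some single $\Gamma_\beta(S)$ with $\beta<\chi$ (this is exactly where regularity of $\chi$ is used: a set of size $<\chi$ cannot be cofinal in $\chi$). Hence $\bv T\in\Gamma_{\beta+1}(S)\subseteq\Gamma_\chi(S)$, so $\Gamma_\chi(S)$ is closed under joins of its $\cU$-subsets; combined with (i) it is a down-set containing $S$, i.e.\ a $\cU$-ideal containing $S$. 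For (iii), let $J$ be any $\cU$-ideal with $S\subseteq J$; a transfinite induction shows $\Gamma_\alpha(S)\subseteq J$ for all $\alpha$: the base case is $S^\downarrow\subseteq J$ since $J$ is a down-set, the successor case uses that $J$ is closed under $\cU$-joins of its subsets and is a down-set, and the limit case is trivial. Taking $\alpha=\chi$ gives $\Gamma_\cU(S)\subseteq J$. Finally, extensivity ($S\subseteq\Gamma_\cU(S)$) is immediate, monotonicity follows because if $S\subseteq S'$ then $\Gamma_\cU(S')$ is a $\cU$-ideal containing $S$ hence contains the smallest one $\Gamma_\cU(S)$, and idempotence follows because $\Gamma_\cU(S)$ is itself a $\cU$-ideal so is its own smallest enclosing $\cU$-ideal. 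That $\Gamma_\cU$ is \emph{standard} is the observation that $\pd$ is already a $\cU$-ideal (a principal down-set is trivially closed under any existing joins of its subsets, since such a join is $\leq p$), so $\Gamma_\cU(\{p\})=\pd$.

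The only genuinely delicate point is the stabilization argument in (ii): one must choose $\chi$ regular with $\chi\geq\sigma$ precisely so that the cofinality obstruction vanishes and no new joins appear past stage $\chi$. Everything else is a routine transfinite induction, and I would present (i) and the standardness remark briefly, spend a sentence or two on the regularity argument in (ii), and give (iii) as the short induction it is. This is the poset-level analogue of \cite[proposition 1.2]{HicMon84}, and the structure of the proof mirrors that reference.
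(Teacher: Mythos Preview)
Your proposal is correct and follows essentially the same approach as the paper: both show $\Gamma_\chi(S)$ is a $\cU$-ideal by invoking the regularity of $\chi$ to find a single stage $\Gamma_{\beta'}(S)$ containing any given $T\in\cU$ with $T\subseteq\Gamma_\chi(S)$, and both obtain minimality by noting that any $\cU$-ideal containing $S$ absorbs every stage of the recursion. Your write-up simply makes explicit the routine parts (monotonicity of the chain, the closure-operator axioms, standardness via $\pd$ already being a $\cU$-ideal) that the paper dismisses as ``easy to see'' or ``straightforward.''
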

\begin{proof}
Let $S\in\wp(P)\setminus\{\emptyset\}$, and let $I$ be the smallest $\cU$-ideal containing $S$. Then we must have $\Gamma_\cU(S)\subseteq I$ by the closure requirements of $I$. It is easy to see that $\Gamma_\cU(S)$ is a down-set, so it remains only to show that $\Gamma_\cU(S)$ is closed under joins from $\cU$. So let $X\in\cU$ and suppose $X\subseteq \Gamma_\cU(S)$. Then by definition of $\Gamma_\cU$ we have $X\subseteq \bigcup_{\beta<\chi}\Gamma_\beta(S)$, so for each $x\in X$ there is some $\beta_x<\chi$ with $x\in\Gamma_{\beta_x}(S)$. Since $\chi$ is regular there must be $\beta'$ with $\beta_x\leq\beta'<\chi$ for all $x\in X$, and so $\bv X\in \Gamma_{\beta' +1}(S)\subseteq \Gamma_{\chi}(S)=\Gamma_\cU(S)$ as required. It is straightforward to show that the function taking sets to the smallest $\cU$-ideal containing them is a standard closure operator.
\end{proof}

\begin{prop}\label{P:morph}
Given a join-specification $\cU$, the canonical map $\phi:P\to\Gamma_\cU[\wp(P)]$ preserves arbitrary existing meets and the joins of all sets from $\cU$.
\end{prop}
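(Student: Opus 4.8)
The plan is to compute both sides of each preservation claim directly inside the complete lattice $\Gamma_\cU[\wp(P)]$, using only the descriptions of meets and joins of closed sets recalled above (meet $=$ intersection, join $= \Gamma_\cU$ of the union) together with Proposition \ref{P:trans}; the recursive construction of Definition \ref{D:G} is not really needed for this statement. Throughout I use that $\phi$ is an order embedding and that, by Proposition \ref{P:trans}, the $\Gamma_\cU$-closed sets are exactly the $\cU$-ideals of $P$.

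For meets, suppose $A\subseteq P$ and $m=\bw A$ exists in $P$. First I would check that each $\phi(a)=a^{\downarrow}$ is $\Gamma_\cU$-closed: it is a down-set, and if $X\in\cU$ with $X\subseteq a^{\downarrow}$ then $a$ is an upper bound of $X$, so $\bv X\leq a$ and hence $\bv X\in a^{\downarrow}$; thus $a^{\downarrow}$ is a $\cU$-ideal. Since the meet of a family of closed sets is its intersection, $\bw_{a\in A}\phi(a)=\bigcap_{a\in A}a^{\downarrow}=\{p\in P:p\leq a\text{ for all }a\in A\}$, and because $m$ is the greatest lower bound of $A$ this set is exactly $m^{\downarrow}=\phi(m)$. (The degenerate case $A=\emptyset$, where $m$ is a top element of $P$, also works: both sides equal $P$, the top of $\Gamma_\cU[\wp(P)]$.) Hence $\phi$ preserves all existing meets.

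For joins from $\cU$, let $S\in\cU$; by the definition of a join-specification $s=\bv S$ exists in $P$. The join of the family $\{\phi(x):x\in S\}$ in $\Gamma_\cU[\wp(P)]$ is $\Gamma_\cU\bigl(\bigcup_{x\in S}x^{\downarrow}\bigr)=\Gamma_\cU(S^{\downarrow})=\Gamma_\cU(S)$, the last equality because $S^{\downarrow}=\Gamma_0(S)\subseteq\Gamma_\cU(S)$ and $\Gamma_\cU$ is monotone and idempotent. By Proposition \ref{P:trans}, $\Gamma_\cU(S)$ is the smallest $\cU$-ideal containing $S$. On the one hand $s^{\downarrow}$ is a $\cU$-ideal containing $S$ (every element of $S$ lies below $s$), so $\Gamma_\cU(S)\subseteq s^{\downarrow}$. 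On the other hand $\Gamma_\cU(S)$ is itself a $\cU$-ideal containing $S\in\cU$, so $s=\bv S\in\Gamma_\cU(S)$, and as $\Gamma_\cU(S)$ is a down-set this forces $s^{\downarrow}\subseteq\Gamma_\cU(S)$. Therefore $\Gamma_\cU(S)=s^{\downarrow}=\phi(s)$, which says precisely that $\phi$ preserves $\bv S$.

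I do not expect a genuine obstacle here: once the lattice operations on $\Gamma_\cU$-closed sets and Proposition \ref{P:trans} are available, the statement is a matter of bookkeeping. The only points needing a moment's care are verifying that each principal down-set $a^{\downarrow}$ is genuinely $\Gamma_\cU$-closed (so that intersection really does compute the meet in $\Gamma_\cU[\wp(P)]$), and keeping track of the fact that preservation is asserted only for those meets and joins that exist in $P$ in the first place.
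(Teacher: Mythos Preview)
Your proof is correct. For meets you do exactly what the paper does, only with more detail (you explicitly check that each $a^\downarrow$ is a $\cU$-ideal, whereas the paper simply recalls that intersections of closed sets are closed and writes the one-line computation).

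For joins the approaches diverge. The paper does not argue directly: it forward-references Lemma~\ref{L:equiv} and Corollary~\ref{C:relate}(2), which together say that $\cU\subseteq\cU_{\Gamma_\cU}$ and that membership in $\cU_{\Gamma_\cU}$ is equivalent to $\phi$ preserving the relevant join. That route passes through the Galois connection of Proposition~\ref{P:gal}. Your argument is more elementary and self-contained: you compute $\bv_{x\in S}\phi(x)=\Gamma_\cU(S)$ and then sandwich $\Gamma_\cU(S)$ between $s^\downarrow$ and itself using only Proposition~\ref{P:trans} and the definition of a $\cU$-ideal. What you gain is that the proposition no longer depends on results stated after it; what the paper gains is that, once the Galois machinery is in place anyway, this preservation fact drops out as a one-line consequence rather than needing its own two-inclusion argument.
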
 
\begin{proof}
This is well known, but we give a short proof for the sake of completeness. First recall that arbitrary intersections of $\Gamma$-closed sets are also $\Gamma$-closed. So if $\bw S = t$ in $P$, then $\bw \phi[S] = \bigcap_S s^\downarrow = t^\downarrow = \phi(t)$ as required. Preservation of joins from $\cU$ follows from lemma \ref{L:equiv} and corollary \ref{C:relate}(2) below. 
\end{proof}

Note that $\phi$ may also preserve joins of sets that are not in $\cU$. Given a join-specification $\cU$ there will generally be more than one join-completion preserving the specified joins. We are interested in $\Gamma_\cU[\wp(P)]$, which is in fact the largest such join-completion (\cite{Schm74} attributes this result to \cite{Doc67}). This is easily seen by noting that if $e:P\to L$ is a join-completion such that $e(\bv S)=\bv e[S]$ for some $S\subseteq P$, then whenever $x\in L$ and $S\subseteq e^{-1}[x^\downarrow]$ we must have $\bv S\in e^{-1}[x^\downarrow]$ as otherwise $\bv e[S]\neq e(\bv S)$. So in particular if $e:P\to L$ preserves joins from $\cU$ then $e^{-1}[x^\downarrow]$ is a $\cU$-ideal for all $x\in L$. Since $L$ is isomorphic to $\{e^{-1}[x^\downarrow]:x\in L\}$ considered as a lattice ordered by inclusion the result follows as we can think of $L$ as being a subset of the set of all $\cU$-ideals.

\begin{defn}[$\cU_\Gamma$]\label{D:cUG}
Any standard closure operator $\Gamma:\wp(P)\to\wp(P)$ defines a join-specification $\cU_\Gamma$ by 
\begin{equation*}S\in\cU_\Gamma\iff \bv S \text{ exists and for all } \Gamma\text{-closed sets } C \text{ we have } S\subseteq C\implies \bv S\in C 
\end{equation*}  
\end{defn}

\begin{lemma}\label{L:equiv}
For $\cU_\Gamma$ as in definition \ref{D:cUG} and the canonical $\phi:P\to \Gamma[\wp(P)]$ we have 
\begin{equation*}S\in\cU_\Gamma\iff \bv S \text{ exists and } \phi(\bv S) = \bv \phi[S]
\end{equation*} 
\end{lemma}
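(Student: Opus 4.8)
The plan is to translate both sides of the biconditional into statements about the closure operator $\Gamma$ itself and then check that they literally coincide. The key preliminary computation is to evaluate the join $\bv\phi[S]$ in the lattice $\C$ of $\Gamma$-closed sets. Since joins there are given by $\bv_I C_i = \Gamma(\bigcup_I C_i)$ and $\phi(s) = s^\downarrow$, we get $\bv\phi[S] = \Gamma\big(\bigcup_{s\in S} s^\downarrow\big) = \Gamma(S^\downarrow)$; and because $S\subseteq S^\downarrow$ while, by standardness, $S^\downarrow = \bigcup_{s\in S}\Gamma(\{s\})\subseteq\Gamma(S)$, monotonicity and idempotence give $\Gamma(S^\downarrow) = \Gamma(S)$. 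On the other side, $\phi(\bv S) = (\bv S)^\downarrow$ whenever the join exists. So the right-hand side of the lemma says exactly that $\bv S$ exists and $(\bv S)^\downarrow = \Gamma(S)$.

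I would then record the two standardness facts the argument needs: every set of the form $p^\downarrow = \Gamma(\{p\})$ is $\Gamma$-closed, and every $\Gamma$-closed set $C$ is a down-set (if $p\in C$ then $p^\downarrow = \Gamma(\{p\})\subseteq\Gamma(C) = C$). For the forward implication, suppose $S\in\cU_\Gamma$; applying the defining condition of $\cU_\Gamma$ to the $\Gamma$-closed set $\Gamma(S)$, which contains $S$, yields $\bv S\in\Gamma(S)$, hence $(\bv S)^\downarrow\subseteq\Gamma(S)$ since $\Gamma(S)$ is a down-set, while conversely $(\bv S)^\downarrow$ is $\Gamma$-closed and contains $S$, so $\Gamma(S)\subseteq\Gamma\big((\bv S)^\downarrow\big) = (\bv S)^\downarrow$; thus $(\bv S)^\downarrow = \Gamma(S)$. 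For the backward implication, suppose $\bv S$ exists and $(\bv S)^\downarrow = \Gamma(S)$, and let $C$ be any $\Gamma$-closed set with $S\subseteq C$; then $\Gamma(S)\subseteq\Gamma(C) = C$, so $\bv S\in(\bv S)^\downarrow = \Gamma(S)\subseteq C$, which is precisely what membership in $\cU_\Gamma$ demands.

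There is no genuine obstacle here: once the join in $\C$ is correctly identified as $\Gamma(S)$, both directions are one-line deductions from the definition of $\cU_\Gamma$ and the two standardness observations. The point needing the most attention is simply justifying the translation $\bv\phi[S] = \Gamma(S)$ from the formula for joins of closed sets; note also that no separate treatment of $S = \emptyset$ is required, as the reasoning above applies verbatim whenever $\bv S$ exists.
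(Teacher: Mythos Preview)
Your proof is correct and follows essentially the same route as the paper's: both identify $\bv\phi[S]$ with the smallest $\Gamma$-closed set containing $S$ (i.e.\ $\Gamma(S)$), then use the definition of $\cU_\Gamma$ together with the facts that $(\bv S)^\downarrow$ is $\Gamma$-closed and that $\Gamma$-closed sets are down-sets. The paper's version is simply terser, leaving implicit the computation $\bv\phi[S]=\Gamma(S)$ and the two standardness observations that you spell out.
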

\begin{proof}
Let $S\in \cU_\Gamma$. Then $\bv\phi[S]$ is the smallest $\Gamma$-closed set containing $S$, so must therefore contain $\bv S$, and is indeed equal to $(\bv S)^\downarrow=\phi(\bv S)$. Conversely, if $\bv S$ exists and $\bv \phi[S]=\phi(\bv S)$ then every $\Gamma$-closed set containing $S$ must contain $\bv S$, and so $S\in\cU_\Gamma$ by definition.
\end{proof}

The join-specifications of a poset $P$ are a subset of $\wp(\wp(P))$, and as this subset is closed under taking arbitrary unions and intersections they form a complete lattice when ordered by inclusion. This leads us to the following result. 
\begin{prop}\label{P:gal}
Let $\mathcal J$ be the lattice of join-specifications of $P$, and let $\mathcal C$ be the lattice of standard closure operators on $P$. Define $f:\mathcal J\to \mathcal C$ and $g:\mathcal C\to \mathcal J$ by  $f(\cU)=\Gamma_\cU$ and $g(\Gamma)=\cU_\Gamma$. Then $f$ and $g$ form a Galois connection between $\mathcal C$ and $\mathcal J$. I.e. for all $\Gamma\in \mathcal C$ and $\cU\in\mathcal J$ we have $\Gamma_\cU(T)\leq \Gamma(T)$ for all $T\in\wp(P)\iff \cU\subseteq \cU_\Gamma$.
\end{prop}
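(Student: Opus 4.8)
The plan is to verify the defining biconditional of a Galois connection directly, using the characterisations of $\Gamma_\cU$ and $\cU_\Gamma$ already established. Recall that a Galois connection between the posets $\mathcal C$ and $\mathcal J$ amounts to the equivalence $f(\cU)\geq \Gamma \iff \cU\leq g(\Gamma)$; here the order on $\mathcal C$ is pointwise inclusion of closure operators, so $f(\cU)\geq\Gamma$ unpacks as ``$\Gamma(T)\subseteq\Gamma_\cU(T)$ for all $T\in\wp(P)$'', which is what the statement calls $\Gamma_\cU(T)\leq\Gamma(T)$ under the dual ordering convention. (I would be slightly careful about which direction of inclusion corresponds to ``$\leq$'' in $\mathcal C$, since closure operators are dually ordered relative to their lattices of closed sets; the cleanest route is to phrase everything in terms of the lattices of closed sets, where $\Gamma_1\leq\Gamma_2$ iff every $\Gamma_2$-closed set is $\Gamma_1$-closed.)

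First I would prove the forward direction: assume $\Gamma_\cU$ refines $\Gamma$ in the sense that every $\Gamma$-closed set is $\Gamma_\cU$-closed, equivalently is a $\cU$-ideal (using Proposition~\ref{P:trans}, which identifies $\Gamma_\cU$-closed sets with $\cU$-ideals). Take $S\in\cU$. Then $S$ has a join in $P$ by definition of a join-specification, and for any $\Gamma$-closed set $C$ with $S\subseteq C$, the set $C$ is a $\cU$-ideal, hence closed under the join of $S$, so $\bv S\in C$. By Definition~\ref{D:cUG} this says exactly $S\in\cU_\Gamma$. Hence $\cU\subseteq\cU_\Gamma$.

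For the converse, assume $\cU\subseteq\cU_\Gamma$. I want every $\Gamma$-closed set $C$ to be a $\cU$-ideal, so that the smallest $\cU$-ideal containing any $T$ is contained in $C$ whenever $T\subseteq C$, giving $\Gamma_\cU(T)\subseteq\Gamma(T)$ (taking $C=\Gamma(T)$). So fix a $\Gamma$-closed set $C$. It is a down-set because $\Gamma$ is standard (if $p\leq c\in C$ then $p^\downarrow=\Gamma(\{p\})\subseteq\Gamma(C)=C$). For closure under $\cU$-joins, take $S\in\cU$ with $S\subseteq C$; since $\cU\subseteq\cU_\Gamma$ we have $S\in\cU_\Gamma$, and Definition~\ref{D:cUG} applied to the $\Gamma$-closed set $C$ gives $\bv S\in C$. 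Thus $C$ is a $\cU$-ideal, as required, and the two directions together establish the Galois connection.

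I do not expect a serious obstacle here; the content is entirely bookkeeping against Definitions~\ref{D:cUG}, \ref{D:jspec} and Proposition~\ref{P:trans}. The one genuine point of care is the order-direction issue flagged above — making sure that ``$\Gamma_\cU(T)\leq\Gamma(T)$ for all $T$'' is correctly read as the pointwise-inclusion statement ``$\Gamma_\cU(T)\subseteq\Gamma(T)$'' and that this is indeed the order in $\mathcal C$ that makes $f\dashv g$ (rather than $g\dashv f$) — and, relatedly, confirming that $f$ and $g$ are each monotone, which falls out of the same two arguments run with ``$\Gamma$'' replaced by a second closure operator or ``$\cU$'' by a second join-specification.
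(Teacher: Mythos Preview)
Your proposal is correct and essentially the same as the paper's proof: both directions rely on Proposition~\ref{P:trans} (the $\Gamma_\cU$-closed sets are exactly the $\cU$-ideals) and Definition~\ref{D:cUG}, and your converse direction is verbatim the paper's. The only cosmetic difference is in the forward direction, where the paper applies the pointwise hypothesis $\Gamma_\cU(S)\subseteq\Gamma(S)$ directly to $S\in\cU$ (noting $\bv S\in\Gamma_\cU(S)\subseteq\Gamma(S)$, the smallest $\Gamma$-closed set containing $S$), whereas you first pass through the equivalent reformulation ``every $\Gamma$-closed set is a $\cU$-ideal''; your hedging about the order convention on $\mathcal C$ is unnecessary, since the statement itself fixes $\leq$ as pointwise inclusion.
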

\begin{proof}
Let $\Gamma\in\mathcal C$ and let $\cU\in\mathcal J$. Suppose $\Gamma_\cU\leq \Gamma$. Since $\mathcal J$ is ordered by inclusion we let $S\in \cU$, and we aim to show that $S\in \cU_\Gamma$. Now, $\Gamma_\cU(S)$ is the smallest $\cU$-ideal containing $S$, so $\bv S\in \Gamma_\cU(S)$, and since $\Gamma_\cU(S)\leq \Gamma(S)$ this means any $\Gamma$-closed set containing $S$ must contain $\bv S$. But then by definition $S\in\cU_\Gamma$ so we are done. 

For the converse suppose $\cU\subseteq \cU_\Gamma$ and let $T\in\wp(P)$. We aim to show that $\Gamma_\cU(T)\subseteq \Gamma(T)$. Since $\Gamma_\cU(T)$ is the smallest $\cU$-ideal containing $T$ it is sufficient to show that $\Gamma(T)$ is also a $\cU$-ideal. But since $\cU\subseteq \cU_\Gamma$ this follows directly from the definition of $\cU_\Gamma$.
\end{proof}

\begin{cor}\label{C:relate}\mbox{}
\begin{enumerate}
\item[(1)] For all standard closure operators $\Gamma:\wp(P)\to\wp(P)$ we have $\Gamma_{\cU_\Gamma}\leq \Gamma$, but we do not necessarily have $\Gamma=\Gamma_{\cU_\Gamma}$.
\item[(2)] For all join-specifications $\cU$ of $P$ we have $\cU\subseteq \cU_{\Gamma_\cU}$, but we do not necessarily have $\cU= \cU_{\Gamma_\cU}$.
\item[(3)] If $\cU'= \cU_{\Gamma_\cU}$ then $\Gamma_\cU=\Gamma_{\cU'}$.
\item[(4)] If $\Gamma'= \Gamma_{\cU_\Gamma}$ then $\cU_\Gamma=\cU_{\Gamma'}$.
\end{enumerate}
\end{cor}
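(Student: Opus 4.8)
The statement is a purely formal consequence of the Galois connection $(f,g)$ of Proposition~\ref{P:gal}, where $f(\cU)=\Gamma_\cU$ is the left adjoint and $g(\Gamma)=\cU_\Gamma$ the right adjoint (the phrasing of \ref{P:gal} is exactly the adjunction $f(\cU)\le\Gamma\iff\cU\subseteq g(\Gamma)$). The plan is to read off the two inclusions as the unit and counit of this adjunction, to obtain (3) and (4) as the two triangle identities, and finally to exhibit small counterexamples showing the inclusions can be strict. For the inclusion in (2), instantiate \ref{P:gal} with $\Gamma:=\Gamma_\cU$: its left-hand side, ``$\Gamma_\cU(T)\subseteq\Gamma_\cU(T)$ for all $T$'', holds trivially, so the right-hand side yields $\cU\subseteq\cU_{\Gamma_\cU}$. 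Dually, for the inequality in (1) instantiate \ref{P:gal} with $\cU:=\cU_\Gamma$: now the right-hand side ``$\cU_\Gamma\subseteq\cU_\Gamma$'' is trivial, so the left-hand side yields $\Gamma_{\cU_\Gamma}(T)\subseteq\Gamma(T)$ for all $T$.

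For (3) and (4) I would first record that $f$ is order preserving --- if $\cU\subseteq\cV$ then every $\cV$-ideal is a $\cU$-ideal, so the least $\cV$-ideal containing $T$ contains the least $\cU$-ideal containing $T$, i.e. $\Gamma_\cU\le\Gamma_\cV$ --- and likewise that $g$ is order preserving, since a $\Gamma$-closed set is $\Gamma'$-closed whenever $\Gamma'\le\Gamma$. Then for (3), writing $\cU'=\cU_{\Gamma_\cU}$, part (2) gives $\cU\subseteq\cU'$ and hence $\Gamma_\cU\le\Gamma_{\cU'}$, while part (1) applied to $\Gamma_\cU$ gives $\Gamma_{\cU'}=\Gamma_{\cU_{\Gamma_\cU}}\le\Gamma_\cU$; so $\Gamma_\cU=\Gamma_{\cU'}$. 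For (4), writing $\Gamma'=\Gamma_{\cU_\Gamma}$, part (1) gives $\Gamma'\le\Gamma$ and hence $\cU_{\Gamma'}\subseteq\cU_\Gamma$ by monotonicity of $g$, while part (2) applied to the join-specification $\cU_\Gamma$ gives $\cU_\Gamma\subseteq\cU_{\Gamma_{\cU_\Gamma}}=\cU_{\Gamma'}$; so $\cU_\Gamma=\cU_{\Gamma'}$. (Equivalently one may simply invoke the general identities $f\circ g\circ f=f$ and $g\circ f\circ g=g$ for adjoint pairs.)

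It remains to see that equality can fail in (1) and (2), and here the two-element chain $P=\{0<1\}$ already suffices. For (2), take $\cU=\{\{0\},\{1\}\}$, the least join-specification on $P$; then $\Gamma_\cU(S)=S^\downarrow$, so every down-set of $P$ is $\Gamma_\cU$-closed, and since $\bv\{0,1\}=1\in\{0,1\}$ the set $\{0,1\}$ lies in every $\Gamma_\cU$-closed set containing it, whence $\{0,1\}\in\cU_{\Gamma_\cU}\setminus\cU$. For (1), take $\Gamma$ to be the standard closure operator with closed sets $\{0\}$ and $\{0,1\}$ (the Dedekind--MacNeille closure of $P$), so that $\Gamma(\emptyset)=\{0\}$; since the empty set is a $\cV$-ideal for every join-specification $\cV$, we get $\Gamma_{\cU_\Gamma}(\emptyset)=\emptyset\ne\{0\}=\Gamma(\emptyset)$, so $\Gamma\ne\Gamma_{\cU_\Gamma}$. (If a counterexample not resting on the empty set is wanted, one can instead take $P=\mathbb N\sqcup\{m\}$, the chain of naturals together with an incomparable point, with $\Gamma$ again the Dedekind--MacNeille closure: then $\Gamma(\emptyset)=\emptyset$, yet the down-set $\mathbb N$ is a $\cU_\Gamma$-ideal, since its only subsets possessing a join in $P$ are the finite ones, while $\Gamma(\mathbb N)=P$.)

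I expect no serious obstacle here: the only points needing care are fixing the variance of the adjunction (so that the unit and counit point the way (1) and (2) require) and checking that the proposed witnesses really are standard closure operators and join-specifications.
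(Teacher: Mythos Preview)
Your argument is correct and follows the paper's approach: the paper simply says that (1)--(4) follow from the Galois connection of Proposition~\ref{P:gal} and points to Examples~\ref{E:neq} and~\ref{E:neq2} for the failures of equality, which is exactly what you do, only spelled out in full. The one difference is in the counterexamples: the paper uses a three-element antichain with the MacNeille closure for (1) and a five-element poset for (2), whereas your two-element chain (and the alternative $\mathbb N\sqcup\{m\}$) work equally well and are arguably more economical.
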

\begin{proof}
This all follows from the fact that $f$ and $g$ from proposition \ref{P:gal} form a Galois connection, with  examples \ref{E:neq} and \ref{E:neq2} witnessing lack of equality for parts (1) and (2) respectively.  
\end{proof}

\begin{example}\label{E:neq}
Let $P$ be the three element antichain $\{a,b,c\}$, and let the $\Gamma$-closed sets be $\emptyset, \{a\},\{b\},\{c\}$, and $\{a,b,c\}$. Then $\cU_\Gamma=\{\{a\},\{b\},\{c\}\}$ (so $\Gamma[\wp(P)]$ is the MacNeille completion of $P$), and the set of $\cU_\Gamma$-ideals of $P$ is just $\wp(P)$ (so $\Gamma_{\cU_\Gamma}[\wp(P)]$ is the Alexandroff completion composed of all down-sets of $P$ in this case). Then, for example, $\Gamma(\{a,b\})=\{a,b,c\}$, but $\Gamma_{\cU_\Gamma}(\{a,b\})=\{a,b\}$. 
\end{example}
Example \ref{E:neq} also demonstrates that not every standard closure operator arises from a join-specification. This is because the only join-specification on $P$ is $\{\{a\},\{b\},\{c\}\}$, and as we saw in the example the induced closure operator produces the Alexandroff completion, and not, for example, the MacNeille completion. This issue is also discussed in \cite[section 2]{Schm74}.

\begin{example}\label{E:neq2} 
Let $P$ be a poset containing elements $x,x',y,y',$ and $z$. Let the non-trivial orderings be $x<x'<z$, and $y<y'<z$, so $x\vee y = x'\vee y'=z$. Let $\cU=\{\{x\},\{x'\},\{y\},\{y'\},\{z\},\{x,y\}\}$ be a join-specification. Then $\{x',y'\}\in \cU_{\Gamma_\cU}\setminus \cU$.  
\end{example}

It follows from corollary \ref{C:relate} that different join-specifications can define the same closure operator. Given a standard closure operator $\Gamma$ arising from a join-specification, while there is not necessarily a minimal generating join-specification, definition \ref{D:rad} allows us to define a class of join-specifications generating $\Gamma$ whose sets have the smallest possible maximum size. This puts an upper bound on the number of iterations required in a recursive construction of $\Gamma$ (though we can't expect to do better than $\omega$, even if all the sets in the generating join-specification have bounded finite size). These minimal join-specifications (definition \ref{D:min} below) are also relevant when discussing the distributivity of the lattice of $\Gamma$-closed sets (see corollary \ref{C:bound}). 

\begin{defn}[radius of $\Gamma$]
Given a standard closure operator $\Gamma$ such that $\Gamma=\Gamma_\cU$ for some join-specification $\cU$, we define the radius of $\Gamma$ to be the minimum of $\{\chi:\chi$ is the radius of a join-specification $\cU'$ of $P$ with $\Gamma_{\cU'}=\Gamma\}$.
\end{defn}

\begin{defn}[minimal join-specification]\label{D:min}
A join-specification $\cU$ of $P$ is minimal if the radius of $\cU$ is equal to the radius of $\Gamma_\cU$. 
\end{defn}

The following technical lemma will be used in the next section.

\begin{lemma}\label{L:fix}
Let $\cU$ be a join-specification. Then the following hold for all $S\in\wp(P)$:
\begin{enumerate}
\item[(1)] If $\Gamma_\cU(S)=\pd$ then $p=\bv S$. 
\item[(2)] If $p=\bv S$ and $p\in\Gamma_\cU(S)$ then $S\in \cU_{\Gamma_\cU}$.
\end{enumerate}
\end{lemma}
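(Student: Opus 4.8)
The plan is to treat the two parts separately. The main tool in both cases is Proposition \ref{P:trans}, which says that $\Gamma_\cU(S)$ is the smallest $\cU$-ideal containing $S$; for the second part we also unwind the defining condition of $\cU_{\Gamma_\cU}$ from Definition \ref{D:cUG}, applied to the standard closure operator $\Gamma_\cU$.

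For part (1), suppose $\Gamma_\cU(S)=\pd$. Since $S\subseteq\Gamma_\cU(S)=\pd$, the element $p$ is an upper bound of $S$, so it remains to show it is the least one. Here I would take an arbitrary upper bound $q$ of $S$ and check that $q^\downarrow$ is itself a $\cU$-ideal: it is obviously a down-set, and if $T\in\cU$ with $T\subseteq q^\downarrow$, then $q$ is an upper bound of $T$, so $\bv T\leq q$ (the join $\bv T$ exists because $\cU$ is a join-specification), whence $\bv T\in q^\downarrow$. Thus $q^\downarrow$ is a $\cU$-ideal containing $S$, and since $\Gamma_\cU(S)$ is the smallest such, $\pd=\Gamma_\cU(S)\subseteq q^\downarrow$, i.e. $p\leq q$. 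Hence $p$ is the least upper bound of $S$, so $\bv S$ exists and equals $p$.

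For part (2), suppose $p=\bv S$ and $p\in\Gamma_\cU(S)$. By Definition \ref{D:cUG}, membership $S\in\cU_{\Gamma_\cU}$ amounts to: $\bv S$ exists, and every $\Gamma_\cU$-closed set $C$ with $S\subseteq C$ satisfies $\bv S\in C$. The join $\bv S=p$ exists by hypothesis, and if $C$ is $\Gamma_\cU$-closed with $S\subseteq C$, then by monotonicity and idempotence $\Gamma_\cU(S)\subseteq\Gamma_\cU(C)=C$, so $p\in\Gamma_\cU(S)\subseteq C$. This is exactly the required condition.

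I do not anticipate a genuine obstacle here; the only point that needs a little care is the verification in part (1) that a principal down-set $q^\downarrow$ is a $\cU$-ideal, and this is precisely where condition (1) in the definition of a join-specification is used, so that $\bv T$ is guaranteed to exist for $T\in\cU$.
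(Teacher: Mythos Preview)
Your proof is correct and follows essentially the same approach as the paper: both parts hinge on Proposition~\ref{P:trans} (that $\Gamma_\cU(S)$ is the smallest $\cU$-ideal containing $S$), and the key observation in part~(1) that $q^\downarrow$ is a $\cU$-ideal for any upper bound $q$ of $S$ is exactly what the paper uses, though the paper phrases the argument by contradiction rather than directly.
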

\begin{proof}
For the first part note that $p$ must be an upper bound for $S$, so if $p\neq \bv S$ then $S$ has another upper bound $q$ with $p\not\leq q$. But $\Gamma_\cU(S)\subseteq \pd\cap q^\downarrow$ by proposition \ref{P:trans}, and thus $p\notin \Gamma_\cU(S)$, which would be a contradiction. 
For the second part note that since $\bv S=p$, by definition \ref{D:cUG} we have $S\in \cU_{\Gamma_\cU}\iff$ for all $\Gamma_\cU$-closed sets $C$ we have $S\subseteq C\implies p\in C$. Since $\Gamma_\cU(S)$ is the smallest $\Gamma_\cU$-ideal containing $S$,  if $p\in \Gamma_\cU(S)$ then every $\Gamma_\cU$-ideal containing $S$ must also contain $p$ and we are done.  
\end{proof}

\section{When is a lattice of $\cU$-ideals a frame?}\label{S:lattices}
As mentioned in the introduction, definitions of distributivity in semilattices modeled on that of Schein \cite{Sch72} give rise to results that can be stated in our terminology as \emph{a semilattice $S$ is $\alpha$-distributive if and only if the lattice of $\cU_\alpha$-ideals of $S$ is $\alpha$-distributive}, where $\cU_\alpha$ contains all sets smaller than $\alpha$ whose joins are defined (where $\alpha$ is some finite or regular cardinal \cite{Hic78,CorHic78,HicMon84}). Moreover, it turns out that if this lattice of $\cU_\alpha$-ideals is $\alpha$-distributive then it will be a frame (see definition \ref{D:dist}  below). We can extend this to posets and arbitrary join-specifications, in the sense that if the lattice of $\cU$-ideals of $P$ fails to be a frame then it must be because distributivity fails for the embedded images of some element of $P$ and some set in $\cU$ (see corollary \ref{C:bound''}). The key result is theorem \ref{T:main}, which can be seen as a partial generalization of \cite[theorem 2.7]{HicMon84} to arbitrary join-specifications. 

\begin{defn}[$\alpha$-distributive]\label{D:dist} 
Given a cardinal $\alpha$ we say a lattice (or a semilattice) $L$ is $\alpha$-distributive if given $\{x\}\cup Y\subseteq L$ such that $|Y|<\alpha$, if $x\wedge \bv Y$ exists then $\bv_Y(x\wedge y)$ also exists and the two are equal. Note that when $3\leq n\leq\omega$, in the lattice case $n$-distributivity is just distributivity. When $L$ is a complete lattice and $L$ is $\alpha$-distributive for all $\alpha$ we say $L$ is a \emph{frame}.
\end{defn}

From now on we fix a minimal join-specification $\cV$ and we define $\Phi=\Gamma_\cV$. Let $\sigma$ be the radius of $\cV$ and let $\chi$ be the smallest regular cardinal with $\sigma\leq\chi$. Similarly let $\sigma'$ be the radius of $\cU_\Phi$ (recall definition \ref{D:cUG}), and let $\chi'$ be the smallest regular cardinal with $\sigma'\leq\chi'$.

\begin{defn}[$\Upsilon$]\label{D:U}
Given $S\subseteq P$ we define the following subsets of $P$ using transfinite recursion.
\begin{itemize}
\item $\Upsilon_0(S) = S^\downarrow$.
\item If $\alpha+1$ is a successor ordinal then $\Upsilon_{\alpha+1}(S)=\{\bv T : T\in\cU_{\Phi}$ and $T\subseteq \Upsilon_\alpha(S)\}$.
\item If $\lambda$ is a limit ordinal $\Upsilon_\lambda(S)=\bigcup_{\beta<\lambda}\Upsilon_\beta(S)$.
\end{itemize}
We define $\Upsilon:\wp(P)\to\wp(P)$ by $\Upsilon(S)=\Upsilon_{\chi'}(S)$ for all $S\in\wp(P)$.
\end{defn}
This definition differs from definition \ref{D:G} in that we do not close downwards during the successor steps. Also note the use of $\chi'$ in place of $\chi$. This is important in the proof of corollary \ref{C:bound}. Note that $\Upsilon$ will not necessarily be a closure operator as it may not be idempotent.

\begin{lemma}\label{L:sub}
$\Upsilon(S)\subseteq \Phi(S)$ for all $S\in \wp(P)$.
\end{lemma}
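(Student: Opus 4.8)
The plan is to prove, by transfinite induction on $\alpha$, that $\Upsilon_\alpha(S)\subseteq\Phi(S)$ for every ordinal $\alpha$ and every $S\in\wp(P)$; the lemma then follows immediately by taking $\alpha=\chi'$, since $\Upsilon(S)=\Upsilon_{\chi'}(S)$. The single observation that makes the induction go through is that $\Phi(S)$ is itself a $\Phi$-closed set (because $\Phi$ is idempotent), equivalently a $\cV$-ideal by proposition \ref{P:trans}; in particular it is a down-set containing $S$ which is closed under exactly the joins picked out by $\cU_\Phi$ in the sense of definition \ref{D:cUG}.

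For the base case, $\Upsilon_0(S)=S^\downarrow$, and since $\Phi(S)$ is a down-set containing $S$ we get $S^\downarrow\subseteq\Phi(S)$. For a successor step, assume $\Upsilon_\alpha(S)\subseteq\Phi(S)$. A typical element of $\Upsilon_{\alpha+1}(S)$ is of the form $\bv T$ with $T\in\cU_\Phi$ and $T\subseteq\Upsilon_\alpha(S)\subseteq\Phi(S)$. Since $\Phi(S)$ is $\Phi$-closed and $T\in\cU_\Phi$, the defining property of $\cU_\Phi$ in definition \ref{D:cUG} forces $\bv T\in\Phi(S)$, whence $\Upsilon_{\alpha+1}(S)\subseteq\Phi(S)$. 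The limit case is trivial: if $\Upsilon_\beta(S)\subseteq\Phi(S)$ for all $\beta<\lambda$, then $\Upsilon_\lambda(S)=\bigcup_{\beta<\lambda}\Upsilon_\beta(S)\subseteq\Phi(S)$.

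There is essentially no obstacle here. The one point worth flagging is that, in contrast to the construction of $\Gamma_\cU$ in definition \ref{D:G}, one should not try to match the stages $\Upsilon_\alpha$ against the stages $\Gamma_\beta$ used to build $\Phi$; instead one compares every stage of $\Upsilon$ against the single fixed target $\Phi(S)$, after which the definition of $\cU_\Phi$ does all the work at successor steps. The fact that definition \ref{D:U} omits the downward closure at successor steps only makes each $\Upsilon_{\alpha+1}(S)$ smaller than the corresponding set would be in definition \ref{D:G}, so it is harmless for this inclusion (the discrepancy between $\chi$ and $\chi'$ is likewise irrelevant, as the induction covers all ordinals).
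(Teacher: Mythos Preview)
Your proof is correct. The paper's own argument is slightly more compressed: it observes that $\Phi=\Gamma_{\cV}=\Gamma_{\cU_\Phi}$ by corollary \ref{C:relate}(3), and then notes that $\Upsilon(S)\subseteq\Gamma_{\cU_\Phi}(S)$ simply because $\Upsilon$ is the same transfinite construction as $\Gamma_{\cU_\Phi}$ with the downward closures at successor steps omitted (and both constructions terminate at the same stage $\chi'$). Your route bypasses the Galois-connection machinery and instead inducts directly against the fixed target $\Phi(S)$, using only that $\Phi(S)$ is $\Phi$-closed together with the defining property of $\cU_\Phi$. The two arguments rest on the same underlying fact---that $\Phi$-closed sets absorb joins from $\cU_\Phi$---but the paper packages it via corollary \ref{C:relate}(3), whereas your version is self-contained and does not depend on that earlier result.
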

\begin{proof}
We have $\Phi=\Gamma_\cV=\Gamma_{\cU_\Phi}$ by proposition \ref{C:relate}(3), and by definition of $\Upsilon$ we have $\Upsilon(S)\subseteq \Gamma_{\cU_\Phi}(S)$ for all $S\in\wp(P)$.
\end{proof}

\begin{lemma}\label{L:dist}
Let $J$ be an indexing set, and let  $I,K_j\in\Cp$ for all $j\in J$. Then $I\cap \Upsilon(\bigcup_J K_j) \subseteq \Phi(\bigcup_J(I\cap K_j))$.
\end{lemma}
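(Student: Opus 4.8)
The natural approach is transfinite induction on the ordinal stage $\alpha$ in the construction of $\Upsilon$, proving the statement $I\cap \Upsilon_\alpha(\bigcup_J K_j)\subseteq \Phi(\bigcup_J(I\cap K_j))$ for all $\alpha\leq\chi'$. Write $U = \bigcup_J K_j$ and $W = \bigcup_J(I\cap K_j)$ for brevity, so the goal is $I\cap\Upsilon_\alpha(U)\subseteq\Phi(W)$ and in particular the case $\alpha=\chi'$ gives the lemma.

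\emph{Base case.} $\Upsilon_0(U) = U^\downarrow$. If $x\in I$ and $x\leq k$ for some $k\in K_j$, then since $I$ is a downset and $k\in I\cap K_j\subseteq W$, we get $x\leq k$ with $k\in W$; but $I$ being a downset isn't quite enough — we need $x\in\Phi(W)$. Since $x\in I$ and $x\leq k\in W$, and $\Phi(W)\supseteq W^\downarrow$ (as $\Phi$ is standard, hence $\Phi(W)$ is a downset containing $W$), we get $x\in W^\downarrow\subseteq\Phi(W)$. Wait — that needs $x\leq$ some element of $W$; we have $x\leq k$ and $k\in I$ (as $k\in K_j$ and... no, $k$ need not be in $I$). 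Let me redo: $x\in I$, $x\leq k$, $k\in K_j$. Then $x\in I\cap K_j\subseteq W$ since $I,K_j$ are downsets, so actually $x\in W\subseteq\Phi(W)$ directly. Good, the base case is immediate from $I$ and $K_j$ being downsets.

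\emph{Limit case.} If $\lambda$ is a limit ordinal and $x\in I\cap\Upsilon_\lambda(U)$, then $x\in\Upsilon_\beta(U)$ for some $\beta<\lambda$, so $x\in I\cap\Upsilon_\beta(U)\subseteq\Phi(W)$ by the induction hypothesis. This is trivial.

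\emph{Successor case — the main obstacle.} Suppose $x\in I\cap\Upsilon_{\alpha+1}(U)$. Then $x = \bv T$ for some $T\in\cU_\Phi$ with $T\subseteq\Upsilon_\alpha(U)$. We want $x\in\Phi(W)$. The idea is to intersect $T$ with $I$: consider $T' = \{t\wedge x : t\in T\}$ or rather work with $I\cap T^\downarrow$. Since $x = \bv T\in I$ and $I$ is a downset, every $t\in T$ satisfies $t\leq x$, hence $t\in I$, so in fact $T\subseteq I$ already, giving $T\subseteq I\cap\Upsilon_\alpha(U)\subseteq\Phi(W)$ by the induction hypothesis. Now $T\in\cU_\Phi$ means (by definition \ref{D:cUG}) that $\bv T$ lies in every $\Phi$-closed set containing $T$; since $\Phi(W)$ is such a set, $x = \bv T\in\Phi(W)$. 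This closes the induction. The only subtlety to check carefully is that $x\in I$ genuinely forces $T\subseteq I$ — this uses that $I$ is a downset and that $\bv T$ is an upper bound of $T$ — and that the definition of $\cU_\Phi$ applies to give $\bv T\in\Phi(W)$ once $T\subseteq\Phi(W)$, which it does since $\Phi(W)$ is $\Phi$-closed by idempotency of $\Phi$. I expect the successor step to be where all the content sits, but as sketched it reduces to the downset property plus the defining property of $\cU_\Phi$, so no serious obstacle remains; the argument should go through cleanly once these observations are assembled.
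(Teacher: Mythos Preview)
Your proposal is correct and follows essentially the same route as the paper's proof: transfinite induction on the stage $\alpha$, with the successor step handled by observing that $x=\bv T\in I$ forces $T\subseteq I$ (since $I$ is a down-set), whence $T\subseteq I\cap\Upsilon_\alpha(U)\subseteq\Phi(W)$ by the inductive hypothesis, and then $\bv T\in\Phi(W)$ by the defining property of $\cU_\Phi$. The only cosmetic difference is that the paper dispatches the base case in a single word (``trivial''), whereas you spell out that $I$ and each $K_j$ are down-sets; your false starts there should be cleaned up, but the substance is identical.
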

\begin{proof}
We note that $\Upsilon(\bigcup_J K_j)=\bigcup_{\alpha<\chi'}\Upsilon_\alpha(\bigcup_J K_j)$. We proceed by showing that $I\cap\Upsilon_\alpha(\bigcup_J K_j)\subseteq \Phi(\bigcup_J(I\cap K_j))$ for all $\alpha$ using transfinite induction on $\alpha$. If $\alpha=0$ the result is trivial so consider the successor ordinal $\alpha+1$ and assume the appropriate inductive hypothesis. Let $p\in I\cap \Upsilon_{\alpha+1}(\bigcup_J K_j)$. Then $p=\bv T$ for some $T\in\cU_\Phi$ with $T\subseteq \Upsilon_{\alpha}(\bigcup_J K_j)$, and so since $p\in I$ we have $T\subseteq I\cap \Upsilon_{\alpha}(\bigcup_J K_j)$. By the inductive hypothesis this means $T\subseteq \Phi(\bigcup_J(I\cap K_j))$, and thus by definition of $\cU_\Phi$ we have $p\in \Phi(\bigcup_J(I\cap K_j))$ as required. The limit case is trivial.
\end{proof}

\begin{thm}\label{T:main}
The following are equivalent:
\begin{enumerate} 
\item $\Phi(S)=\Upsilon(S)$ for all $S\in\wp(S)$.
\item $\Cp$ is a frame when considered to be a lattice ordered by inclusion.
\end{enumerate}
\end{thm}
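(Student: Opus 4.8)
The plan is to prove the two implications separately, with the bulk of the work in showing that (1) implies (2).

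\textbf{Direction (1) $\Rightarrow$ (2).} Assume $\Phi(S)=\Upsilon(S)$ for all $S\subseteq P$. We need to verify the frame law $I\cap\bv_J K_j=\bv_J(I\cap K_j)$ in $\Cp$, where the joins are the lattice joins, i.e.\ $\bv_J K_j=\Phi(\bigcup_J K_j)$ and $\bv_J(I\cap K_j)=\Phi(\bigcup_J(I\cap K_j))$. The inclusion $\bv_J(I\cap K_j)\subseteq I\cap\bv_J K_j$ is automatic from monotonicity of $\Phi$ and the fact that $I$ is closed: $I\cap K_j\subseteq I$ and $I\cap K_j\subseteq\bigcup_J K_j\subseteq\Phi(\bigcup_J K_j)$, and an intersection of closed sets is closed. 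For the reverse inclusion, observe that under hypothesis (1) we have $\Phi(\bigcup_J K_j)=\Upsilon(\bigcup_J K_j)$, so
\[
I\cap\Phi\Bigl(\bigcup_J K_j\Bigr)=I\cap\Upsilon\Bigl(\bigcup_J K_j\Bigr)\subseteq\Phi\Bigl(\bigcup_J(I\cap K_j)\Bigr),
\]
where the final inclusion is exactly lemma \ref{L:dist}. This gives both inclusions, hence equality, and since this holds for arbitrary index sets $J$, the lattice $\Cp$ is a frame.

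\textbf{Direction (2) $\Rightarrow$ (1).} Assume $\Cp$ is a frame. We always have $\Upsilon(S)\subseteq\Phi(S)$ by lemma \ref{L:sub}, so it suffices to prove $\Phi(S)\subseteq\Upsilon(S)$ for every $S$. The natural approach is transfinite induction along the recursive construction of $\Phi=\Gamma_{\cU_\Phi}$ (using corollary \ref{C:relate}(3)), showing $\Gamma_\beta(S)\subseteq\Upsilon(S)$ for each ordinal $\beta$, where $\Gamma_\beta$ is built from $\cU_\Phi$ as in definition \ref{D:G}. The base and limit steps are routine. For the successor step one takes $p\leq\bv T$ with $T\in\cU_\Phi$ and $T\subseteq\Gamma_\beta(S)\subseteq\Upsilon(S)$; then $\bv T\in\Upsilon(S)$ by the successor step in the definition of $\Upsilon$, but to conclude $p\in\Upsilon(S)$ we must absorb the downward closure, and $\Upsilon$ does \emph{not} close downwards at successor stages. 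This is where the frame hypothesis must enter: the point is to write $\pd$ (or rather $p$, viewed inside $\Upsilon(S)$) as obtained via the frame distributivity law applied in $\Cp$. Concretely, since $\bv T\in\Upsilon(S)$ there is a closed set containing $\bv T$ realized as $\Phi$ of a union of $\Upsilon$-stages, and intersecting with $\pd$ (a closed set) and invoking the frame law lets one express $p$ as a join of elements of the form $p\wedge(\text{earlier element})$, each of which lies below something already in $\Upsilon(S)$ at an earlier stage; feeding these back through the $\cU_\Phi$-closure step of $\Upsilon$ recovers $p\in\Upsilon(S)$.

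\textbf{Main obstacle.} The genuine difficulty is the successor step of the second direction: reconciling the fact that $\Phi$ closes downward at every successor stage while $\Upsilon$ never does, using only the frame law. The right formulation is probably to prove by induction the sharper statement that for all closed $I$ and all $S$, $I\cap\Phi(S)\subseteq\Upsilon(I\cap\Phi(S))$ or some variant that makes the induction on the $\Gamma_\beta$-stages go through with the frame law supplying exactly the step from ``$\bv T\in\Upsilon(S)$ and $p\le\bv T$'' to ``$p\in\Upsilon(S)$''; I would expect lemma \ref{L:fix} to be the tool that converts a downward-closure situation $p\le\bv T$ into a genuine distinguished join $p=\bv(\text{something})$ with that something expressible inside $\Upsilon(S)$. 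Getting the bookkeeping of ordinal stages right — ensuring the newly produced join-sets still lie in $\cU_\Phi$ and appear by stage $\chi'$ — is the fiddly part, and is precisely why $\Upsilon$ was defined using $\chi'$ rather than $\chi$.
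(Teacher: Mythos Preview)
Your direction $(1)\Rightarrow(2)$ is complete and is exactly the paper's argument.

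For $(2)\Rightarrow(1)$ your plan is also the paper's, and your instinct that lemma~\ref{L:fix} is the key tool is correct; but you stop at a sketch (``the right formulation is probably\ldots'', ``I would expect\ldots'') precisely at the point where the actual work happens. The missing computation is short and concrete, so let me spell it out. With $T\in\cV$ (or $T\in\cU_\Phi$; either works), $T\subseteq\Phi_\alpha(S)$, and $p\le\bv T$, apply the frame law in $\Cp$ to the closed set $\pd$ and the family $\{t^\downarrow:t\in T\}$:
\[
\pd \;=\; \pd\cap\bigl(\textstyle\bv T\bigr)^\downarrow \;=\; \pd\cap\bv_T t^\downarrow \;=\; \bv_T\bigl(\pd\cap t^\downarrow\bigr) \;=\; \Phi\bigl(T^\downarrow\cap\pd\bigr).
\]
Now lemma~\ref{L:fix}(1) gives $p=\bv(T^\downarrow\cap\pd)$, and lemma~\ref{L:fix}(2) gives $T^\downarrow\cap\pd\in\cU_\Phi$. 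Since $\Phi_\alpha(S)$ is downward closed, $T^\downarrow\cap\pd\subseteq\Phi_\alpha(S)$; the inductive hypothesis then places $T^\downarrow\cap\pd$ inside $\Upsilon_\alpha(S)$ (the paper in fact proves the stage-matched inclusion $\Phi_\alpha(S)\subseteq\Upsilon_\alpha(S)$, which is slightly cleaner than your $\Gamma_\beta(S)\subseteq\Upsilon(S)$ but equivalent in effect), and a single successor step of $\Upsilon$ yields $p\in\Upsilon_{\alpha+1}(S)$. That is the whole argument: the ``something'' you were looking for is $T^\downarrow\cap\pd$, and no sharper inductive statement is needed. Your remark about $\chi'$ is correct and is exactly why the regularity argument lets the new set (whose size is $<\sigma'$) land inside a single stage of $\Upsilon$.
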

\begin{proof}
($1\implies 2$). Let $J$ be an indexing set and let $I,K_j\in \Cp$ for all $j\in J$. We must show that $I\cap\Phi(\bigcup_J K_j)=\Phi(\bigcup_J(I\cap K_j))$. Since we are assuming $\Phi=\Upsilon$ and the right side is always included in the left it remains only to show that $I\cap\Upsilon(\bigcup_J K_j)\subseteq\Phi(\bigcup_J(I\cap K_j))$, and this is lemma \ref{L:dist}.

($2\implies 1$). We note that $\Phi_0(S)=\Upsilon_0(S)$ for all $S\in\wp(P)$ by definition, and we proceed by using transfinite induction to show that $\Phi_\alpha(S)\subseteq \Upsilon_\alpha(S)$ for all cardinals $\alpha$ and for all $S\in\wp(P)$. As $\Upsilon(S)\subseteq \Phi(S)$ by lemma \ref{L:sub} the result then follows. 

Let $S\in \wp(P)$ and suppose $\Phi_\alpha(S)\subseteq\Upsilon_\alpha(S)$ for some cardinal $\alpha$. Let $T\subseteq \Phi_\alpha(S)$ and suppose $T\in\cU$. Let $p\leq \bv T$. Then in $\Phi[\wp(P)]$ we have $\pd\cap \bv _T t^\downarrow=\pd\cap (\bv T)^\downarrow = \pd$ by proposition \ref{P:morph}. Since $\Phi[\wp(P)]$ is a frame we also have $\bv_T (\pd \cap t^\downarrow)= \pd$, and thus $\bv_T (\pd \cap t^\downarrow)=\Phi(\bigcup_T(\pd \cap t^\downarrow))=\Phi(T^\downarrow\cap \pd)=\pd$. So by lemma \ref{L:fix}(1) we have $p=\bv (T^\downarrow\cap \pd)$, and thus by lemma \ref{L:fix}(2) we have $T^\downarrow\cap \pd\in\cU_{\Phi}$. But $T^\downarrow\subseteq\Phi_\alpha(S)\subseteq \Upsilon_\alpha(S)$, and thus $T^\downarrow\cap \pd\subseteq \Upsilon_\alpha(S)$, and so $p\in \Upsilon_{\alpha+1}(S)$ as required. The limit case is trivial and so we are done.     
\end{proof}

\begin{cor}\label{C:bound}
If $\Cp$ is not a frame then there is $T\in\cV$ and $p\in P$ such that
\begin{enumerate}
\item $p\leq \bv T$, and 
\item $\phi(p)\wedge\bv \phi[T]\neq\bv_T(\phi(p)\wedge \phi(t))$ in $\Cp$. 
\end{enumerate}
Here $\phi$ is the canonical map from $P$ into $\Phi[\wp(P)]$ taking $p$ to $\pd$.
\end{cor}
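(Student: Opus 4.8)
The plan is to read this off directly from Theorem~\ref{T:main} by locating the first stage at which the two recursions $\Phi_\alpha$ and $\Upsilon_\alpha$ diverge. Suppose $\Cp$ is not a frame. By Theorem~\ref{T:main} there is $S\in\wp(P)$ with $\Phi(S)\neq\Upsilon(S)$, and since $\Upsilon(S)\subseteq\Phi(S)$ by Lemma~\ref{L:sub} we may pick $q\in\Phi(S)\setminus\Upsilon(S)$. Recalling $\Phi(S)=\Phi_\chi(S)$ and $\Upsilon(S)=\Upsilon_{\chi'}(S)$, using $\chi\leq\chi'$ (which holds because $\cV\subseteq\cU_\Phi$ by Corollary~\ref{C:relate}(2), so the radius of $\cU_\Phi$ is at least that of $\cV$) together with the fact that the $\Upsilon_\alpha(S)$ are increasing in $\alpha$ (as $\cU_\Phi$, being a join-specification, contains every singleton), we get $q\notin\Upsilon_\chi(S)$, so $\Phi_\alpha(S)\not\subseteq\Upsilon_\alpha(S)$ for some $\alpha$. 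Take the least such $\alpha$; since $\Phi_0(S)=\Upsilon_0(S)$ and the limit stages of both recursions are unions, $\alpha$ is a successor, say $\alpha=\gamma+1$. Thus $\Phi_\gamma(S)\subseteq\Upsilon_\gamma(S)$ while some $p\in\Phi_{\gamma+1}(S)\setminus\Upsilon_{\gamma+1}(S)$ exists.

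By Definition~\ref{D:G} applied to $\Phi=\Gamma_\cV$, there is $T\in\cV$ with $T\subseteq\Phi_\gamma(S)$ and $p\leq\bv T$; this is the pair I claim works. Condition~(1) is immediate, so it remains to verify condition~(2). Suppose instead $\phi(p)\wedge\bv\phi[T]=\bv_T(\phi(p)\wedge\phi(t))$ in $\Cp$. Since $\cV$ is a join-specification, Proposition~\ref{P:morph} gives $\bv\phi[T]=\phi(\bv T)=(\bv T)^\downarrow$, so $\phi(p)\wedge\bv\phi[T]=\pd\cap(\bv T)^\downarrow=\pd$ because $p\leq\bv T$; and $\bv_T(\phi(p)\wedge\phi(t))=\Phi\bigl(\bigcup_{t\in T}(\pd\cap t^\downarrow)\bigr)=\Phi(\pd\cap T^\downarrow)$. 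Hence our assumption reads $\Phi(\pd\cap T^\downarrow)=\pd$. Then Lemma~\ref{L:fix}(1) forces $p=\bv(\pd\cap T^\downarrow)$, and Lemma~\ref{L:fix}(2) gives $\pd\cap T^\downarrow\in\cU_\Phi$. But $\Phi_\gamma(S)$ is a down-set, so $T\subseteq\Phi_\gamma(S)$ yields $T^\downarrow\subseteq\Phi_\gamma(S)\subseteq\Upsilon_\gamma(S)$, whence $\pd\cap T^\downarrow\subseteq\Upsilon_\gamma(S)$, and therefore $p=\bv(\pd\cap T^\downarrow)\in\Upsilon_{\gamma+1}(S)$ by Definition~\ref{D:U}, contradicting the choice of $p$. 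So condition~(2) holds.

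This is essentially the contrapositive of the $(2\implies 1)$ half of Theorem~\ref{T:main}, evaluated at the first stage where the recursions part company, so I do not expect a genuine obstacle. The point that needs the most care — and which is really the whole content of the corollary — is the observation that the inductive step of that proof only ever invokes the distributivity law for $\phi$-images of elements of $P$ against $\phi$-images of sets in $\cV$, never for arbitrary elements of $\Cp$; the remaining work is the minor ordinal bookkeeping in the first paragraph (that the least failing stage is a successor, and that a divergence of $\Phi(S)$ and $\Upsilon(S)$ already manifests at some $\Phi_\alpha$ versus $\Upsilon_\alpha$).
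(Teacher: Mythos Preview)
Your argument is correct and follows essentially the same route as the paper: locate the first stage at which $\Phi_\alpha(S)$ escapes the $\Upsilon$-side, pull out $p\leq\bv T$ with $T\in\cV$ at that stage, and use Lemma~\ref{L:fix} to derive a contradiction from the assumed distributivity identity. The only organizational difference is that the paper compares $\Phi_\alpha(S)$ against the full $\Upsilon(S)$ (invoking that $\Upsilon(S)$ is closed under $\cU_\Phi$-joins) rather than against $\Upsilon_\alpha(S)$ at the same stage, which lets it avoid your $\chi\leq\chi'$ check; conversely your version sidesteps the need to verify that $\Upsilon(S)$ is $\cU_\Phi$-closed.
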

\begin{proof}
By theorem \ref{T:main} if $\Cp$ is not a frame then there is $S\in \wp(P)$ with $\Phi(S)\neq\Upsilon(S)$. Let $\alpha$ be the smallest cardinal such that $\Phi_\alpha(S)\not\subseteq \Upsilon(S)$. This exists by lemma \ref{L:sub} and the assumption that $\Phi(S)\neq\Upsilon(S)$. Moreover, $\alpha$ cannot be $0$ or a limit cardinal so $\alpha=\beta + 1$ for some $\beta$. Choose any $p\in\Phi_{\alpha}(S)\setminus \Upsilon(S)$. Then by minimality of $\alpha$ we must have $p\notin\Phi_\beta(S)$, and $p\leq\bv T$ for some $T\in\cV$ with $T\subseteq \Phi_\beta(S)$. In $\Phi[\wp(P)]$ we have $\pd\cap \bv_T t^\downarrow = \pd$. However, if $\pd=\bv_T(\pd\cap t^\downarrow)=\Phi(T^\downarrow\cap\pd)$, then $p=\bv(T^\downarrow\cap \pd)$ and $T^\downarrow\cap \pd\in\cU_\Phi$ by lemma \ref{L:fix}. Since $T^\downarrow\cap \pd\subseteq \Phi_\beta(S)$, by choice of $\alpha$ we have $T^\downarrow\cap \pd\subseteq \Upsilon(S)$. Since $\Upsilon(S)$ is closed under joins from $\cU_\Phi$ (by an argument similar to that in the proof of proposition \ref{P:trans}), and $p\notin\Upsilon(S)$ by choice of $p$, we must therefore have $\pd\neq\bv_T(\pd\cap t^\downarrow)$ to avoid contradiction.   
\end{proof}

Here the minimality of $\cV$ is relevant as it gives a smaller upper bound on the possible size of a cardinal $\alpha$ for which $\alpha$-distributivity can fail. Two immediate consequences of corollary \ref{C:bound} are the following.

\begin{cor}\label{C:bound''}
If $\Cp$ is not a frame then $\Cp$ must fail to be $\sigma$-distributive (recall that $\sigma$ is the radius of $\cV$).
\end{cor}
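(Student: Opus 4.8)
The plan is to derive this directly from Corollary \ref{C:bound}. Suppose $\Cp$ is not a frame. Then Corollary \ref{C:bound} supplies $T\in\cV$ and $p\in P$ with $p\leq\bv T$ and $\phi(p)\wedge\bv\phi[T]\neq\bv_T(\phi(p)\wedge\phi(t))$ in $\Cp$, where $\phi$ is the canonical embedding of $P$ into $\Phi[\wp(P)]$. I would then unpack this inequality as an explicit witnessed failure of $\sigma$-distributivity, taking $x=\phi(p)$ and $Y=\phi[T]$.

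The only thing to verify is the cardinality bound. Since $T\in\cV$ and $\sigma$ is by definition the radius of $\cV$, definition \ref{D:rad} gives $|T|<\sigma$; and since $\phi$ is an order embedding (being a join-completion) it is injective, so $|Y|=|\phi[T]|=|T|<\sigma$. Moreover $\Cp$ is a complete lattice, so $x\wedge\bv Y$ exists, and reindexing the right-hand join along the bijection $\phi\colon T\to\phi[T]$ gives $\bv_{y\in Y}(x\wedge y)=\bv_T(\phi(p)\wedge\phi(t))$. Hence $\{x\}\cup Y\subseteq\Cp$ with $|Y|<\sigma$ is a pair for which $x\wedge\bv Y\neq\bv_{y\in Y}(x\wedge y)$, so by definition \ref{D:dist} the lattice $\Cp$ fails to be $\sigma$-distributive.

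There is essentially no obstacle here: all of the substance is already contained in Corollary \ref{C:bound}, and the present corollary merely re-expresses it in the language of $\alpha$-distributivity. The only (routine) point is the bookkeeping observation that membership of $T$ in $\cV$ forces $|\phi[T]|<\sigma$, which is exactly why the minimality of $\cV$ (equivalently, the choice of $\sigma$ as its radius) matters for the sharpness of the conclusion.
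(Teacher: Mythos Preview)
Your argument is correct and is exactly the paper's approach: apply Corollary~\ref{C:bound} to obtain $T\in\cV$ and $p\in P$ witnessing a distributivity failure, then observe that $|T|<\sigma$ by definition of the radius. The paper's proof is simply a one-line compression of what you wrote.
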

\begin{proof}
If $\Cp$ is not a frame then distributivity fails for $\phi(p)\wedge\bv \phi[T]$ for some $T\in\cV$, and $|T|<\sigma$ by definition. 
\end{proof}

\begin{cor}\label{C:bound'}
Given a poset $P$ the lattice of all down-sets of $P$ closed under existing finite joins is a frame if and only if it is distributive.
\end{cor}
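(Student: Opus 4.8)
The plan is to derive Corollary~\ref{C:bound'} as a special case of Corollary~\ref{C:bound''}. First I would observe that the lattice of all down-sets of $P$ closed under existing finite joins is precisely $\Phi[\wp(P)]$ for a suitable choice of join-specification. Namely, let $\cW$ be the collection of all finite nonempty subsets $S$ of $P$ such that $\bv S$ exists in $P$. This $\cW$ satisfies the three conditions of Definition~\ref{D:jspec}: every member has a defined join by construction, every singleton $\{p\}$ has join $p$ and so lies in $\cW$, and $\emptyset\notin\cW$. By Proposition~\ref{P:trans}, $\Gamma_\cW$ is the standard closure operator taking a set to the smallest $\cW$-ideal containing it, and a $\cW$-ideal is exactly a down-set closed under all existing finite joins. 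Hence the lattice of $\cW$-ideals of $P$, ordered by inclusion, is the lattice in the statement.

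Next I would pass from $\cW$ to a minimal join-specification $\cV$ generating the same closure operator, so that the framework of this section (where $\cV$ is fixed as minimal and $\Phi=\Gamma_\cV$) applies. The radius of $\cW$ is $\omega$, since every member of $\cW$ is finite and $\cW$ contains sets of every finite cardinality that occurs among joinable finite subsets (in particular all singletons, so the radius is at least $2$; and it is at most $\omega$ since all members are finite). The radius $\sigma$ of a minimal $\cV$ with $\Gamma_\cV=\Gamma_\cW$ is then at most $\omega$. Actually I only need $\sigma\leq\omega$, which is immediate from minimality since $\cW$ itself witnesses radius $\leq\omega$.

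Now I would apply Corollary~\ref{C:bound''}: if $\Phi[\wp(P)]=\C[\wp(P)]$ with the bracket notation matching the macros, i.e.\ $\Cp$, is not a frame, then it fails to be $\sigma$-distributive, hence fails to be $\omega$-distributive since $\sigma\leq\omega$ (recall that $n$-distributivity implies $m$-distributivity for $m\leq n\leq\omega$, so $\omega$-distributivity is the strongest of these and its failure is implied by failure of $\sigma$-distributivity for any $\sigma\leq\omega$). But for a lattice, $\omega$-distributivity — indeed already $3$-distributivity — coincides with ordinary distributivity by the remark in Definition~\ref{D:dist}. Therefore, if the lattice is not a frame it is not distributive; equivalently, if it is distributive then it is a frame. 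The converse, that a frame is distributive, is immediate from the definition of a frame. This gives the equivalence.

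The main obstacle I anticipate is purely bookkeeping: making sure the chosen $\cW$ (or the associated minimal $\cV$) really has radius $\sigma\leq\omega$ and that "down-sets closed under existing finite joins" matches "$\cW$-ideals" on the nose, including the edge cases of the empty set and of finite subsets with no upper bound (which simply are not required to be closed under a join, consistent with the $\cW$-ideal definition since such subsets are not in $\cW$). There is no substantive difficulty beyond this, since all the real work has been done in Theorem~\ref{T:main} and Corollary~\ref{C:bound''}; the corollary is, as the text says, an easy consequence.
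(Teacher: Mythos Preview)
Your proposal is correct and follows essentially the same route as the paper: identify the lattice in question as $\Gamma_\cW[\wp(P)]$ for $\cW$ the join-specification of all nonempty finite subsets with defined join, note its radius is (at most) $\omega$, and invoke Corollary~\ref{C:bound''} together with the remark in Definition~\ref{D:dist} that $n$-distributivity for $3\leq n\leq\omega$ coincides with ordinary distributivity in a lattice. Your extra step of passing to a minimal $\cV$ with the same closure operator is not in the paper's three-line proof, but it is a reasonable piece of bookkeeping given that Section~\ref{S:lattices} formally fixes $\cV$ as minimal; since minimality only sharpens the bound on $\sigma$ and you only need $\sigma\leq\omega$, this causes no difficulty.
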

\begin{proof}
This lattice is produced by the closure operator arising from the join-specification containing all non-empty finite sets with defined joins. The radius of this join-specification is $\omega$, and the result then follows from corollary \ref{C:bound''}.
\end{proof}

In the case where $\kappa$ is a regular cardinal and $\cV=\cU_\kappa=\{S\in\wp(P)\setminus\{\emptyset\}:|S|<\kappa$ and $\bv S$ exists$\}$ theorem \ref{T:main} and its corollaries are superseded by \cite[theorem 2.7]{HicMon84}. Indeed, in this case the mentioned theorem shows, using our notation, that $\Cp$ is a frame if and only if $\Phi(S)=\{\bv T: T\in \cV$ and $T\subseteq S^\downarrow\}$ for all $S\in \wp(P)$. 

Various distributivity properties for join-completions of posets (and quasiorders) are investigated in \cite{ErnWil83}. See in particular \cite[theorem 2.1]{ErnWil83} for a summary of other properties equivalent to the join-completion corresponding to a given standard closure operator being a frame.

\section{Sentences guaranteeing representability}\label{S:sent}

\begin{defn}[$(\alpha,\beta)$-representable]\label{D:rep}
For cardinals $\alpha$ and $\beta$ a poset $P$ is $(\alpha,\beta)$-representable if there is an embedding $h:P\to F$, where $F$ is a powerset algebra, such that $h$ preserves meets of sets with cardinalities strictly less than $\alpha$, and joins of sets with cardinalities strictly less than $\beta$. When $\alpha=\beta$ we just write $\alpha$-representable.
\end{defn}

The following definition appears under a slightly different name as \cite[definition 2]{CheKem92}.

\begin{defn}[$\LMD$]\label{D:LMD}
A poset $P$ is $\LMD$ (meet distributive) if for all $a,b,c\in P$, if $a\wedge (b\vee c)$ is defined then $(a\wedge b)\vee (a\wedge c)$ is also defined and the two are equal.  
\end{defn}

Given a cardinal $\alpha\geq 3$ we can generalize this to the following.

\begin{defn}[$\LMD_\alpha$]
$P$ is $\LMD_\alpha$ if whenever $\{a\}\cup X\subseteq P$, $|X|<\alpha$ and $a\wedge\bv X$ is defined in $P$, we have $\bv_X(a\wedge x)$ is also defined in $P$ and the two are equal. 
\end{defn}

Note that for semilattices $\LMD_\alpha$ is equivalent to $\alpha$-distributivity. We use our terminology to avoid confusion with the distributivity for posets defined in \cite{HicMon84}. Given a cardinal $\gamma$ suppose $\cV=\cU_\gamma$ is the set of all sets $S\subseteq P$ such that $|S|< \gamma$ and $\bv S$ exists in $P$, and define $\Phi$ and $\Upsilon$ for this $\cV$ as in section \ref{S:lattices}. Note that in section \ref{S:lattices} we say $\cV$ is minimal, but minimality is not essential in the definitions of $\Phi$ and $\Upsilon$, or the theory developed therein. So the fact that $\cU_\gamma$ may not be minimal is not a problem here. We have the following theorem.

\begin{thm}\label{T:frame}
Let $\gamma$ be any cardinal strictly greater than 2. If $P$ is $\LMD_\gamma$ then $\Cp$ is a frame.
\end{thm}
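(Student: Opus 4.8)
The plan is to apply Theorem \ref{T:main}: it suffices to show that $\Phi(S) = \Upsilon(S)$ for all $S \in \wp(P)$, under the hypothesis that $P$ is $\LMD_\gamma$. Since $\Upsilon(S) \subseteq \Phi(S)$ always holds by Lemma \ref{L:sub}, the real content is the reverse inclusion, and following the structure of the proof of ($2 \implies 1$) in Theorem \ref{T:main}, this reduces to a transfinite induction showing $\Phi_\alpha(S) \subseteq \Upsilon_\alpha(S)$ for all $\alpha$. The base case and limit cases are immediate from the definitions, so everything hinges on the successor step.

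For the successor step, assume $\Phi_\alpha(S) \subseteq \Upsilon_\alpha(S)$. A typical new element of $\Phi_{\alpha+1}(S)$ has the form $p \leq \bv T$ for some $T \in \cV = \cU_\gamma$ with $T \subseteq \Phi_\alpha(S)$; I need to show $p \in \Upsilon_{\alpha+1}(S)$. The key observation is that $T^\downarrow \cap \pd$ is a set of size $< \gamma$ (it is a subset of the $<\gamma$-sized set $T^\downarrow$... actually more carefully, $T^\downarrow$ need not be small, so instead I should work with $\{t \wedge p : t \in T\}$ or note that $p = \bv\{p \wedge t : t \in T\}$ via the $\LMD_\gamma$ hypothesis). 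Concretely: since $p \leq \bv T$, we have $p \wedge \bv T = p$ is defined in $P$, so by $\LMD_\gamma$ the join $\bv_{t \in T}(p \wedge t)$ is defined in $P$ and equals $p$. Each $p \wedge t$ exists in $P$ and lies in $\pd \cap t^\downarrow \subseteq T^\downarrow \cap \pd \subseteq \Phi_\alpha(S) \subseteq \Upsilon_\alpha(S)$ (using that $\Phi_\alpha(S) = \Phi_\alpha(S)^\downarrow$ is a downset and contains $T$). Writing $T' = \{p \wedge t : t \in T\}$, we have $T' \in \cU_\gamma = \cV$ since $|T'| \leq |T| < \gamma$ and $\bv T' = p$ exists.

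It remains to check that $T' \in \cU_\Phi$, so that $p = \bv T' \in \Upsilon_{\alpha+1}(S)$ by definition of $\Upsilon$. For this I use Lemma \ref{L:fix}(2): since $\bv T' = p$, it suffices to show $p \in \Gamma_\cV(T') = \Phi(T')$. But $T' \subseteq P$ and $\bv T' = p$, and since $\cV = \cU_\gamma$ contains $T'$ itself (as just verified), one recursion step gives $p = \bv T' \in \Phi_1(T') \subseteq \Phi(T')$; hence $p \in \Phi(T')$, so $T' \in \cU_{\Gamma_\cV} = \cU_\Phi$ by Lemma \ref{L:fix}(2). Then $T' \subseteq \Upsilon_\alpha(S)$ and $T' \in \cU_\Phi$ gives $p = \bv T' \in \Upsilon_{\alpha+1}(S)$, completing the induction. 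The main obstacle I anticipate is the bookkeeping in the successor step — in particular making sure the set to which one applies the $\Upsilon$-recursion genuinely lies in $\cU_\Phi$ and is a subset of $\Upsilon_\alpha(S)$ simultaneously — which is exactly where passing from $T$ to $T' = \{p \wedge t : t \in T\}$ (legitimised by $\LMD_\gamma$) does the work, mirroring the frame computation in Theorem \ref{T:main} but now supplied internally by the hypothesis on $P$ rather than by an assumed frame structure on $\Cp$.
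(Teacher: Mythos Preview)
Your argument is correct. The paper takes a different, shorter route: instead of proving condition~(1) of Theorem~\ref{T:main} directly, it invokes Corollary~\ref{C:bound} in the contrapositive. If $\Cp$ were not a frame, Corollary~\ref{C:bound} would supply $T\in\cV$ and $p\leq\bv T$ with $\phi(p)\wedge\bv\phi[T]\neq\bv_T(\phi(p)\wedge\phi(t))$; the paper then computes, using $\LMD_\gamma$ and Proposition~\ref{P:morph}, that $\phi(p)\wedge\bv\phi[T]=\phi(p\wedge\bv T)=\phi\bigl(\bv_T(p\wedge t)\bigr)=\bv_T(\phi(p)\wedge\phi(t))$, a contradiction. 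Your approach instead re-runs the transfinite induction from the $(2\Rightarrow 1)$ half of Theorem~\ref{T:main}, replacing the frame hypothesis at the successor step by the $\LMD_\gamma$ hypothesis on $P$ --- and indeed the set $T'=\{p\wedge t:t\in T\}$ you build is exactly what makes both arguments go through. One small simplification: once you have $T'\in\cV=\cU_\gamma$, Corollary~\ref{C:relate}(2) gives $T'\in\cU_\Phi$ immediately, so the detour through Lemma~\ref{L:fix}(2) is unnecessary. The paper's route is more economical because Corollary~\ref{C:bound} has already packaged the induction; your route is more self-contained and makes explicit where $\LMD_\gamma$ enters.
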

\begin{proof}
By corollary \ref{C:bound} if $\Cp$ fails to be a frame there must be $T\in \cV$ and $p\in P$ such that $p\leq \bv T$ and $\phi(p)\wedge\bv \phi[T]\neq\bv_T(\phi(p)\wedge \phi(t))$ in $\Cp$. But by definition of $\cV$ we must have $|T|<\gamma$, and so by proposition \ref{P:morph} and the assumption that $P$ is $\LMD_\gamma$ we have $\phi(p)\wedge\bv \phi[T] = \phi(p\wedge \bv T)=\phi(\bv_T(p\wedge t))=\bv_T(\phi(p)\wedge\phi(t))$, so $\Cp$ must be a frame after all.   
\end{proof}

\begin{cor}\label{C:rep}
Every $\LMD$ poset has an $(\omega,3)$-representation.
\end{cor}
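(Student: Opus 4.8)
The plan is to deduce this as a direct corollary of Theorem \ref{T:frame} together with Proposition \ref{P:morph}. First I would observe that an $\LMD$ poset is precisely an $\LMD_3$ poset, since the condition in Definition \ref{D:LMD} quantifies over $a,b,c$, i.e.\ over $a$ and a set $X=\{b,c\}$ of size at most $2<3$; conversely $\LMD_3$ applied to any two-element (or one-element) set $X$ recovers exactly Schein's condition. So take $\gamma=3$ in Theorem \ref{T:frame}, let $\cV=\cU_3$ be the join-specification consisting of all nonempty sets $S\subseteq P$ with $|S|<3$ and $\bv S$ defined (equivalently, all singletons together with all doubletons whose join exists), and set $\Phi=\Gamma_\cV$ as in Section \ref{S:lattices}. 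By Theorem \ref{T:frame}, $\Cp$ is a frame.

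Next I would package the embedding. The canonical map $\phi:P\to\Cp$ is, by construction, a join-completion, hence in particular an order embedding; by Proposition \ref{P:morph} it preserves \emph{all} existing meets from $P$ (so a fortiori all finite meets), and it preserves the joins of all sets in $\cV$, which includes every existing binary join. Thus $\phi$ preserves meets of all sets of size $<\omega$ and joins of all sets of size $<3$. It remains to transport this embedding from the frame $\Cp$ into a powerset algebra while retaining preservation of finite meets and binary joins.

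For that final step I would invoke the standard fact that every frame admits a meet-dense (equivalently, for our purposes, meet- and join-preserving enough) representation: concretely, the collection of completely prime filters, or simply the spatial reflection, gives a frame homomorphism $\Cp\to\wp(X)$ for a suitable set $X$ of points that is injective on... — but one must be careful, since a frame need not be spatial. A cleaner route is: a frame is a complete lattice, and by general facts (e.g.\ the join-completion / closure-operator correspondence, or simply Birkhoff-style arguments) any complete \emph{distributive} lattice embeds into a powerset algebra via a map preserving finite meets and \emph{arbitrary} joins; indeed the map sending $x$ to $\{F : x\in F,\ F\text{ a prime filter}\}$ preserves finite meets and all joins on a frame because in a frame primeness and complete primeness considerations let finite meets and arbitrary joins be computed pointwise — here one uses that $\Cp$ is distributive to get enough prime filters to separate points (prime ideal theorem), and that it is a frame to get that this map preserves arbitrary joins, not merely finite ones. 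Composing $\phi$ with this embedding yields $h:P\to\wp(X)$ preserving finite meets and binary (indeed arbitrary $<3$) joins, i.e.\ an $(\omega,3)$-representation.

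The main obstacle is exactly the last step: one cannot simply say "a frame is spatial." What actually saves us is that we only need the composite to preserve \emph{finite} meets (not arbitrary ones) and \emph{binary} joins, so it suffices that the prime-filter map on $\Cp$ separates points (guaranteed by distributivity via the prime ideal theorem) and preserves finite meets and all joins (guaranteed by the frame condition, since in a frame $x\mapsto\{F\text{ prime} : x\in F\}$ is a join-preserving, finite-meet-preserving, injective map). I would make this precise by citing the relevant facts about representable posets/lattices already in the literature (\cite{Bal69}, \cite{Egr16}), or by spelling out the prime-filter argument directly, noting that distributivity of $\Cp$ plus separation is all that is needed and that the frame property upgrades join-preservation from finite to arbitrary — which is more than enough for an $(\omega,3)$-representation.
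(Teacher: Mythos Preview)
Your plan matches the paper's proof exactly in structure: set $\cV=\cU_3$, apply Theorem \ref{T:frame} (with $\gamma=3$, using that $\LMD=\LMD_3$) to get that $\Cp$ is a frame, invoke Proposition \ref{P:morph} for $\phi$, and then compose with an embedding of $\Cp$ into a powerset algebra. The paper dispatches the last step in one line: since a frame is in particular a distributive lattice, the standard Stone/Birkhoff prime-filter embedding into a powerset algebra preserves finite meets and finite joins, and that is all you need for an $(\omega,3)$-representation after composing with $\phi$.

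Your detour through spatiality and arbitrary-join preservation is unnecessary, and in fact your claim that on a frame the map $x\mapsto\{F\text{ prime}:x\in F\}$ preserves \emph{arbitrary} joins is false in general (it would require completely prime filters, i.e.\ spatiality). Fortunately you never actually use that claim: as you yourself note, the composite only has to preserve finite meets and binary joins, and the prime-filter map on any distributive lattice already does that. So drop the frame-upgrades-join-preservation remark and simply cite distributivity of $\Cp$ for the final embedding, as the paper does.
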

\begin{proof}
Let $\cV=\cU_3$ and let $\Phi$ be defined as in section \ref{S:lattices}. Then as $\Cp$ is distributive it embeds into a powerset algebra $F$ via a map preserving finite meets and joins. Since $\phi:P\to \Cp$ preserves binary joins and arbitrary meets we obtain an $(\omega,3)$-representation for $P$ by composing $\phi$ with the embedding of $\Cp$ into $F$.   
\end{proof}

By setting the value of $\gamma$ appropriately we also obtain the following result.

\begin{cor}\label{C:final}
If $3\leq n\leq \omega$ then every $\LMD_n$ poset has an $(\omega,n)$-representation.
\end{cor}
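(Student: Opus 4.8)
The plan is to run the argument of Corollary \ref{C:rep} with the cardinal $3$ replaced by $n$. So, fix an $\LMD_n$ poset $P$, put $\cV = \cU_n = \{S\in\wp(P)\setminus\{\emptyset\} : |S|<n \text{ and } \bv S \text{ exists in } P\}$, and let $\Phi=\Gamma_\cV$ and $\Upsilon$ be defined from this $\cV$ as in Section \ref{S:lattices}. As observed just before Theorem \ref{T:frame}, $\cU_n$ need not be minimal, but this is harmless: it is a legitimate join-specification (it contains every singleton, omits $\emptyset$, and each of its members has a defined join), so the constructions of Sections \ref{S:rad} and \ref{S:lattices} all apply. Since $n\geq 3>2$, Theorem \ref{T:frame} gives that $\Cp$ is a frame, hence in particular a distributive lattice.

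Next I would invoke the classical representation of distributive lattices: there is a powerset algebra $F$ and a lattice embedding $\theta:\Cp\to F$ preserving all finite meets and all finite joins (for instance the embedding into $\wp(X)$ with $X$ the set of prime filters of $\Cp$). By Proposition \ref{P:morph} the canonical map $\phi:P\to\Cp$ is an order embedding preserving all existing meets and the joins of every member of $\cV$; that is, $\phi$ preserves all existing finite meets and all existing joins of sets of size $<n$ (when $n=\omega$ this reads: all existing finite joins). Composing, $\theta\circ\phi:P\to F$ is an order embedding preserving all existing finite meets and all existing joins of sets of size $<n$, which is precisely an $(\omega,n)$-representation of $P$.

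The points requiring attention are purely bookkeeping: that ``joins of size $<n$ in $P$'' is exactly the collection of joins $\phi$ is guaranteed to preserve (immediate from the definition of $\cU_n$ together with Proposition \ref{P:morph}), and that the reading ``sets of size $<\omega$'' in the case $n=\omega$ correctly recovers the finite case so that one obtains a genuine $(\omega,\omega)$-representation. I expect no substantive obstacle here: all of the real work lives upstream in Theorem \ref{T:main} and Corollary \ref{C:bound}, which are packaged into Theorem \ref{T:frame}; the present statement then follows by taking $\gamma=n$ there and combining the standard distributive-lattice embedding with the preservation properties of $\phi$.
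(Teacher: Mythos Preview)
Your argument is correct and follows essentially the same route as the paper: the paper's proof is literally the one-line remark that Corollary \ref{C:rep} goes through with $\gamma=n$ in place of $\gamma=3$, and your write-up is precisely the fleshed-out version of that remark (take $\cV=\cU_n$, invoke Theorem \ref{T:frame}, embed the resulting distributive lattice into a powerset algebra, and compose with $\phi$).
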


When $\alpha > \omega$ we do \emph{not} obtain a result corresponding to corollary \ref{C:final} because a frame is not necessarily $(\omega,\alpha)$-representable in this case. We know this because for every $\alpha>\omega$, every non-atomic countable Boolean satisfies $\LMD_\alpha$ but is not $(\omega,\alpha)$-representable, as discussed in the introduction.

Note that it follows from theorem \ref{T:frame} and \cite[theorem 2.7]{HicMon84} that if $\kappa$ is a regular cardinal we have $\LMD_\kappa\implies \kappa$-distributive in the sense of \cite{HicMon84} (which for convenience we shall dub $\mathbf{HM}_\kappa$). Also by \cite[theorem 2.7]{HicMon84}, being $\mathbf{HM}_\kappa$ is equivalent to having $\Phi(S)=\{\bv T: T\in \cU_\kappa$ and $T\subseteq S^\downarrow\}$ for all $S\in \wp(P)$ (as mentioned in the passage following corollary \ref{C:bound'}). Using this we can show that $\mathbf{HM}_\kappa\centernot\implies\LMD_\kappa$ for all regular $\kappa$, as example \ref{E:nimply} below provides a poset that is $\mathbf{HM}_\kappa$, but fails to be $\LMD_\kappa$, for all regular $\kappa$.

\begin{example}\label{E:nimply}
Let $P$ be the poset in the diagram below and let $\kappa$ be any regular cardinal. Given $S\in\wp(P)$ define $\Phi'(S)=\{\bv T: T\in \cU_\kappa$ and $T\subseteq S^\downarrow\}$. Then $a\wedge(b\vee c)=c$, but $a\wedge b$ does not exist. So $P$ fails to be $\LMD_3$, and thus fails to be $\LMD_\kappa$. However, the only non-trivial non-principal downsets of $P$ are $\{b,c\}$ and $\{a,b,c\}$, and $\Phi(\{b,c\})=(b\vee c)^\downarrow=\Phi'(\{b,c\})$, and $\Phi(\{a,b,c\})=P=\Phi'(\{a,b,c\})$, and so $P$ is $\mathbf{HM}_\kappa$ for all regular $\kappa$.   
\[\xymatrix{& \bullet & \bullet_a\\
\bullet_b\ar@{-}[ur] & \bullet_c\ar@{-}[ur]\ar@{-}[u] 
}\] 
\end{example}

Combining the preceding discussion with \cite[example 4.2 and theorem 4.8]{HicMon84} we obtain the following chain of strictly one way implications.

\begin{equation*}
\LMD_\omega\implies \mathbf{HM}_\omega \implies \omega\text{-representable}
\end{equation*}

\section{Obstructions to representability}\label{S:obs}
Another way to view these results is to think about obstructions to a poset being, for example, $3$-representable. For the sake of this discussion we say $P$ has a \emph{triple} $(a,b,c)$ if there are $a,b,c\in P$ such that $a\wedge(b\vee c)$ is defined. If $(a,b,c)$ is a triple of $P$ and $(a\wedge b)\vee (a\wedge c)$ is defined but not equal to $a\wedge(b\vee c)$ we call $(a,b,c)$ a \emph{split triple}, and we say a triple $(a,b,c)$ is an \emph{indeterminate triple} if $(a\wedge b)\vee (a\wedge c)$ is not defined. 

It's easy to see that having a split triple is a sufficient condition for a poset $P$ to fail to be $3$-representable. Moreover, existence of a split triple can be defined in first order logic, so if having a split triple were also a necessary condition for a poset to fail to be $3$-representable it would follow that the class of $3$-representable posets is finitely axiomatizable. This is not the case \cite{Egrsub2}, so there must be other, less obvious obstructions to $3$-representability. This is not surprising, as even in the simpler semilattice case the existence of a split triple is not necessary for failure of $3$-representability. However, the semilattices that fail to be $(\omega,3)$-representable can be characterized as those that contain either a split triple or an indeterminate triple (this is \cite[theorem 2.2]{Bal69} phrased in the terminology of triples), which is also a first order property. Since the class of $(m,n)$-representable posets is elementary for all $m$ and $n$ with $2<m,n\leq \omega$, it follows that the class of posets that fail to be $3$-representable cannot be axiomatized in first order logic at all (otherwise it would be finitely axiomatizable, in contradiction with \cite{Egrsub2}). This contrasts starkly with the intuitive finite axiomatization of the semilattice case.  

Putting this another way, let $\mathcal{S}$ be the class of posets containing a split triple, let $\mathcal{I}$ be the class of posets containing an indeterminate triple but no split triple, let $\mathcal{L}$ be the class of $\LMD$-posets, and let $\mathcal{R}$ be the class of $3$-representable posets. Then $\mathcal{S}$, $\mathcal{I}$, and $\mathcal{L}$ are all basic elementary and partition the class of all posets. Moreover, $\mathcal{L}\subset \mathcal{R}$, and $\mathcal{S}\subset \bar{\mathcal{R}}$. However, $\mathcal{I}\cap\mathcal{R}$ is elementary but not finitely axiomatizable, and $\mathcal{I}\cap\bar{\mathcal{R}}$ is not even elementary. This contrasts with the semilattice case where $\mathcal{L}$ and $\mathcal{R}$ coincide. 

On the other hand, one reason we might expect these obstructions to representability to defy simple characterization comes from computational complexity theory. It was shown in \cite{VanAta} that the problem of deciding whether a finite poset has an $(m,n)$-representation  is $\mathbf{NP}$-complete for countable $m,n$ with at least one greater than $3$. So if there were a simple characterization of the finite posets that fail to be $(m,n)$-representable, such as exists in the semilattice case, we could potentially use this to prove that this decision problem is also in $\mathbf{coNP}$, which would imply the unexpected coincidence $\mathbf{NP}=\mathbf{coNP}$. More precisely, by Fagin's theorem we have $\mathbf{NP}=\mathbf{coNP}$ if and only if the finite posets that fail to be $(m,n)$-representable (for suitable $m$ and $n$) have an existential second order characterization.

\bibliographystyle{srtnumbered}

\end{document}